\theoremstyle{plain}
\newtheorem{thm}{Theorem}[section]
\newtheorem{lemma}[thm]{Lemma}
\newtheorem{prop}[thm]{Proposition}
\newtheorem{cor}[thm]{Corollary}
\newtheorem*{mainthm}{Theorem \ref{main-thm}}
\newtheorem*{maincor1}{Corollary \ref{main-cor1}}
\theoremstyle{definition}
\newtheorem{defn}[thm]{Definition}
\newtheorem{ex}[thm]{Example}
\theoremstyle{remark}
\newtheorem{rmk}[thm]{Remark}
\newcommand{\incgraphics}[1]{ %
   \ifpdf %
      \includegraphics{#1.pdf} %
   \else %
      \includegraphics{#1.eps} %
   \fi %
   }
\newcommand{\R}{\mathbb{R}}
\newcommand{\Z}{\mathbb{Z}}
\newcommand{\U}{\mathcal{U}}
\newcommand{\Xhat}{\hat X}
\newcommand{\h}{\mathfrak{h}}
\newcommand{\Hhat}{{\hat H}}
\newcommand{\Hop}{H^{\mathrm{op}}}
\newcommand{\Hlt}{H^{<}}
\newcommand{\pop}{p^{\mathrm{op}}}
\newcommand{\plt}{p^{<}}
\newcommand{\normi}[1]{\left\|#1\right\|_1}
\newcommand{\comment}[1]{\relax}
\DeclareMathOperator{\rnk}{-rank}
\newcommand{\drank}{d\rnk}
\renewcommand{\i}{\mathfrak{i}}
\renewcommand{\r}{\mathfrak{r}}
\DeclareMathOperator{\LSup}{\Lambda-Supp}
\DeclareMathOperator{\HopSup}{\Hop-Supp}
\DeclareMathAlphabet{\mathitbf}{OML}{cmm}{b}{it}
\newcommand{\C}{\mathitbf{C}}
\title{Finite asymptotic dimension for CAT(0) cube complexes}
\author{Nick Wright}
\keywords{Asymptotic dimension; CAT(0) cube complexes; small cancellation groups}
\subjclass[2000]{54F45; 20F69; 20F65}
\begin{document}

\begin{abstract}
In this paper we prove that the asymptotic dimension of a finite-dimensional CAT(0) cube complex is bounded above by the dimension. To achieve this we prove a controlled colouring theorem for the complex. We also show that every CAT(0) cube complex is a contractive retraction of an infinite dimensional cube. As an example of the dimension theorem we obtain bounds on the asymptotic dimension of small cancellation groups.
\end{abstract}

\maketitle

\section*{Introduction}

CAT(0) cube complexes are a higher dimensional analogue of trees. They are a class of non-positively curved spaces with useful combinatorial and geometric properties. Many interesting classes of groups, for example small cancellation groups, act properly on CAT(0) cube complexes, allowing geometric properties of the groups to be deduced from the corresponding properties of the cube complexes. Indeed by the results of \cite{OW}, `generic' groups groups act on finite dimensional CAT(0) cube complexes; more precisely random groups at density less than $\frac 16$ act freely and cocompactly on finite dimensional CAT(0) cube complexes with probablility tending exponentially to 1.

It is well known that 1-dimensional CAT(0) cube complexes, i.e.\ trees, have asymptotic dimension at most 1. A product of trees gives a higher dimensional CAT(0) cube complex, and by the Hurewicz Theorem for asymptotic dimension \cite{BD}, one again has that the asymptotic dimension is bounded above by the dimension of the cube complex. In this paper, we show that this holds in full generality. Note that we do not require the cube complex to be locally finite.

This paper extends the result of \cite{BCGNW}, where it is shown that finite dimensional CAT(0) cube complexes have property A. It also extends the result of \cite{Dcox} where it is shown that the asymptotic dimension of a right-angled Coxeter group does not exceed the dimension of its Davis complex.

The study of asymptotic dimension for cube complexes is motivated in part by the example of Thompson's group $F$. The question of whether Thompson's group is amenable is one that has attracted a lot of attention (see \cite{AA,Sh,B}), however at present the arguments for and against amenability are incomplete. As Sapir remarked in a private communication, the related question of whether Thompson's group is exact ``is considered almost as hard as amenability,'' and the results of this paper are connected with this problem. In \cite{Dopen} Dranishnikov asked what the dimension growth of Thompson's group is. Establishing polynomial dimension growth for $F$ would imply exactness, by \cite{D}. Thompson's group acts on an infinite dimensional CAT(0) cube complex (see \cite{F}) with the property that the dimension of the cubes increases polynomially with the distance from an orbit. In establishing bounds on the asymptotic dimension of CAT(0) cube complexes we provide a first step in proving the exactness of Thompson's group.

We prove the following result.

\begin{mainthm}
Let $X$ be a CAT(0) cube complex of dimension $D$. Then for all $\varepsilon>0$ there exists an $\varepsilon$-Lipschitz cobornologous map from $X$ to a CAT(0) cube complex of dimension at most $D$. Thus $X$ has asymptotic dimension at most $D$.
\end{mainthm}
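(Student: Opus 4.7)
The plan is to exploit the pocset duality for CAT(0) cube complexes. The complex $X$ is determined by its set of halfspaces $\mathcal{H}$ under complementation and inclusion: vertices correspond to consistent DCC orientations of $\mathcal{H}$, edges to flipping a single halfspace, and the dimension of $X$ equals the maximum cardinality of a pairwise transverse family of hyperplanes. Any sub-pocset $\mathcal{H}' \subseteq \mathcal{H}$ closed under complementation dualises to a CAT(0) cube complex $Y(\mathcal{H}')$ together with a canonical projection $X \to Y(\mathcal{H}')$ sending each orientation to its restriction. The target $Y$ of the desired $\varepsilon$-Lipschitz cobornologous map will be exactly such a quotient, for a carefully chosen $\mathcal{H}'$.

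Fix $\varepsilon > 0$ and set $N = \lceil 1/\varepsilon \rceil$. To choose $\mathcal{H}'$ I would produce a \emph{controlled colouring} of $\mathcal{H}$ (the colouring theorem announced in the abstract), arranged so that along any combinatorial geodesic in $X$, hyperplanes of the same colour are at least $N$ edges apart; then $\mathcal{H}'$ consists of one distinguished representative per colour per block of $N$ consecutive same-colour hyperplanes. The quotient map $f \colon X \to Y(\mathcal{H}')$ will then contract any run of $N$ consecutive edges in $X$ to at most one edge of $Y$, establishing the $\varepsilon$-Lipschitz bound. Two vertices mapping to the same point in $Y$ can differ only on hyperplanes of $\mathcal{H} \setminus \mathcal{H}'$, which by the colouring appear with bounded density along geodesics, so $f$ is cobornologous. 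The dimension bound $\dim Y \leq D$ is essentially formal: any pairwise transverse family in $Y$ lifts under $\mathcal{H}' \hookrightarrow \mathcal{H}$ to a pairwise transverse family in $X$, so has size at most $D$.

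The asymptotic dimension bound then follows by pulling back the canonical cover of $Y$ by open stars of vertices, which has multiplicity bounded by $\dim Y + 1 \leq D+1$, to a cover of $X$. The $\varepsilon$-Lipschitz property of $f$ guarantees Lebesgue number at least $1/\varepsilon$ in $X$, and the cobornologous property guarantees uniformly bounded diameters; letting $\varepsilon \to 0$ gives $\operatorname{asdim}(X) \leq D$ via the standard covering characterisation of asymptotic dimension.

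The principal obstacle is therefore the existence of the controlled colouring, with both local sparsity (to force the Lipschitz bound) and global non-degeneracy (to force cobornologous behaviour). I expect this to proceed by induction on the dimension $D$: the base case $D = 1$ reduces to coarsening a tree by grouping $N$ consecutive edges into blocks, which visibly produces another tree together with a cobornologous $\varepsilon$-Lipschitz quotient. The inductive step must assemble colourings on individual hyperplanes (each itself a CAT(0) cube complex of dimension at most $D-1$) into a globally consistent colouring on $X$, respecting the nesting and transversality structure of $\mathcal{H}$. This is precisely where the finite-dimensional hypothesis enters in an essential way: without a uniform bound on pairwise transverse families, one would encounter uncontrolled branching that would defeat any attempt at sparse colouring.
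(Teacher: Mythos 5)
Your overall strategy---quotient $X$ through a colouring-selected sub-pocset $\mathcal{H}'\subset\mathcal{H}$ to a lower-complexity cube complex and compose---matches the skeleton of the paper's argument, but there is a genuine gap at the central Lipschitz claim, and the paper explicitly flags it: the canonical quotient map $\pi\colon X\to Y(\mathcal{H}')$ is only $1$-Lipschitz, never $\varepsilon$-Lipschitz for $\varepsilon<1$. If $h\in\mathcal{H}'$ and $x,y$ are adjacent vertices across $h$, then $\pi(x),\pi(y)$ are still separated by $h$ in $Y(\mathcal{H}')$ and hence at distance $1$; no choice of sparse $\mathcal{H}'$ fixes this. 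Your assertion that the quotient ``contract[s] any run of $N$ consecutive edges to at most one edge'' conflates a large-scale contraction (which the quotient does achieve on geodesics of length $\geq N$) with an honest pointwise Lipschitz bound, which is what Lemma~\ref{Gromov-lemma} requires. This is the entire reason for Section~\ref{sec:projection}: the paper embeds $X$ into the infinite cube $\C_{\Hhat}$, defines a weighted ``smearing'' map $\psi\colon X\to\C_{\Hhat}$ whose honest Lipschitz constant is $\frac{l}{l+1}<1$, and then composes with the contractive retraction $P\colon\C_{\Hhat}\to\Xhat$ from the Projection Theorem to land back in the cube complex $\Xhat$. Without some interpolation step of this kind your map cannot be better than $1$-Lipschitz.

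A second, related issue is that you ask the colouring to produce hyperplanes of the same colour that are at least $N=\lceil 1/\varepsilon\rceil$ apart along geodesics, with $N$ going to infinity as $\varepsilon\to 0$. A two-colouring cannot achieve this for large $N$; the paper instead proves (Theorem~\ref{colouring}) that a fixed $3^{f-1}D$-controlled two-colouring exists, which via the interpolation gives a single contraction factor $\frac{l}{l+1}$ with $l=3^{f-1}D$, and then obtains arbitrary $\varepsilon$ by iterating the construction: each quotient $\Xhat$ is again a CAT(0) cube complex of dimension at most $D$, so the composite of enough such maps achieves any prescribed Lipschitz constant while remaining cobornologous. Your proposal has no iteration step and hence no mechanism to reach small $\varepsilon$ even if the Lipschitz issue were repaired. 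Also, the notion of ``blocks of $N$ consecutive same-colour hyperplanes'' presupposes a linear order on hyperplanes that does not exist in a general pocset; the paper's construction instead works with inward geodesics relative to a basepoint, predecessors of a hyperplane, and rank-vectors obtained from a flatness filtration---a quite different combinatorial apparatus from the hyperplane-by-hyperplane induction you sketch.

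The parts of your proposal that do align with the paper: the dimension bound $\dim\Xhat\leq\dim X$ is indeed formal from the injection of hyperplanes, the reduction of the asymptotic dimension bound to the existence of $\varepsilon$-Lipschitz cobornologous maps into $D$-dimensional complexes is exactly Lemma~\ref{Gromov-cubed}, and the cobornologous estimate from the colouring (one $0$-coloured hyperplane in every $l+1$ along inward geodesics) is essentially Lemma~\ref{pi-cobounded}. But the missing interpolation and iteration are not cosmetic; they are what turn the large-scale contraction into a genuine Lipschitz bound.
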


While one certainly can have equality of the dimension and asymptotic dimension, one could not expect to have this in general: the dimension of the cube complex can be increased without changing the asymptotic dimension by taking a product with a finite cube complex.

As an immediate corollary of the theorem we have:

\begin{maincor1}
Let $G$ be a group admitting a proper isometric action on a CAT(0) cube complex of dimension $D$. Then $G$ has asymptotic dimension at most $D$.
\end{maincor1}

As an example of this, we can apply the results of Wise \cite{W} to obtain upper bounds for the asymptotic dimension of $B(4)-T(4)$ and $B(6)$ small cancellation groups in terms of the presentation complex for the group, see Example \ref{small-cancellation}. These bounds are local, and can be determined from a presentation of the group. If $G$ is a $B(4)-T(4)$ small cancellation group then the asymptotic dimension of $G$ is at most $c/2$ where $c$ is the maximal circumference of a cell in the presentation complex of $G$. In other words $c$ is the maximal length of a relator (or at most twice the maximal length if some relators are of odd length). If $G$ is a $B(6)$ group then the asymptotic dimension of $G$ is at most $\max\{c,l\}$, where $c$ is as above and $l$ is the maximal cardinality of a complete graph in the generalised link of a vertex.

\medskip

To prove Theorem \ref{main-thm}, the strategy is to construct a Lipschitz map from a CAT(0) cube complex $X$ into a quotient complex $\Xhat$, with Lipschitz constant strictly less than one. By iterating the process we obtain arbitrarily contractive maps into a cube complex of dimension no greater than $X$, thus obtaining a bound on the asymptotic dimension.

In section \ref{sec:colouring} we introduce the concept of controlled colourings on the set of hyperplanes; these are used in section \ref{sec:interp} to produce the quotient complex $\Xhat$. The main result of section \ref{sec:colouring} is an existence theorem for controlled colourings. In section \ref{sec:projection} we prove a projection theorem. Specifically we prove that every CAT(0) cube complex is a contractive retraction of an infinite dimensional cube. The retraction is constructed as an infinite composition of functions, and we give hypotheses under which a general infinite composition is well-defined. In section \ref{sec:interp} we use the controlled colouring to construct a large-scale contractive map from a CAT(0) cube complex $X$ into a quotient $\Xhat$. This map is 1-Lipschitz on small scales, and to complete the proof we carry out an interpolation argument, using the  projection theorem from section \ref{sec:projection}, to produce a map with Lipschitz constant strictly less than 1. The asymptotic dimension bound then follows.

\section{Preliminaries}
\label{prelim}

In this section we introduce some notation and basic structure on CAT(0) cube complexes. For further information on CAT(0) cube complexes see \cite{BH,CN,G-hyp-grps,BN,R,S}.

Recall that a geodesic metric space $(X,d)$ is CAT(0) if all geodesic triangles are slimmer than the corresponding Euclidean triangle. Consider a cell complexes built out of Euclidean cubes $[0,1]^n$, and with isometric attaching maps that take faces to faces of the same dimension. The metric on the cells extends to a path metric on the cell complex. If the resulting metric is CAT(0) then the complex is said to be a CAT(0) cube complex. The CAT(0) metric condition for a cube complex is equivalent to a combinatorial condition on the cells \cite{G-hyp-grps}: $X$ is a CAT(0) cube complex if and only if it is simply connected and the link of each vertex is a flag complex.

CAT(0) cube complexes can also be equipped with a combinatorial metric. For $x,y$ vertices of $X$, let $d(x,y)$ denote the minimum number of edges required to connect $x$ and $y$. This is called the \emph{edge-path metric}. We remark that this corresponds to the path metric on $X$ produced by equipping each cube with the $l^1$ metric instead of the Euclidean ($l^2$) metric.

Recall that a CAT(0) cube complex can be equipped with a set of \emph{hyperplanes}. Each hyperplane divides the vertex set of $X$ into two \emph{halfspaces} (or \emph{sides}) \cite{NR,S}. We denote the set of hyperplanes of $X$ by $H$. Given two hyperplanes $h,k$, one obtains four possible intersections of halfspaces, and $h,k$ are said to \emph{intersect} if each `quadrant' is non-empty. This occurs if and only if $h$ and $k$ cross a common cube, moreover (cf. \cite{S}), given a maximal collection of pairwise intersecting hyperplanes there is a unique cube which all of them cross. The dimension of a CAT(0) cube complex $X$ is thus the maximum number of pairwise intersecting hyperplanes.

For $x,y$ vertices of $X$, the \emph{interval} from $x$ to $y$, denoted $[x,y]$, is the subcomplex whose vertices lie in all halfspaces containing both $x$ and $y$. Given three vertices $x,y,z$ the intersection $[x,y]\cap[y,z]\cap[z,x]$ contains a unique vertex (cf.\ \cite{R}) called the \emph{median} of $x,y,z$.

If $x$ is a vertex of $X$, then $x$ determines a choice of orientation for each hyperplane, that is a selection of one halfspace for each hyperplane. Let $\h_x$ denote the set of halfspaces containing $x$. These halfspaces have non-empty intersection, indeed their intersection is $x$. The halfspaces are, in particular pairwise intersecting. Moreover, if we fix a basepoint $x_0$ in $X$, then a choice of orientation (i.e.\ a halfspace) for each hyperplane of $X$ will determine a vertex $x\in X$ if and only if the halfspaces intersect pairwise and only finitely many\footnote{If infinitely many choices differ from $\h_{x_0}$ then one obtains a point in the combinatorial boundary.} differ from $\h_{x_0}$.

Given any subset $\Hhat$ of the set of hyperplanes of $X$, one can construct a CAT(0) cube complex $\Xhat$. This construction, which is a generalisation of Sageev's construction \cite{S}, is discussed in detail in \cite{CN,BN}. A vertex of $\Xhat$ is defined to be a set of halfspaces, one corresponding to each hyperplane in $\Hhat$, whose total intersection is non-empty. When $\Hhat=H$ one recovers the original complex $X$ as the sets of halfspaces each yield a single vertex of $X$, while in general the space $\Xhat$ is a quotient of $X$. The set of hyperplanes of $\Xhat$ is (or rather is canonically identified with) $\Hhat$. On vertices the quotient map restricts the family of orientations $\h_x$ to a family of orientations of the hyperplanes in $\Hhat$. This is extended affinely to cubes.

\medskip
We now fix a basepoint $x_0$ in $X$. This will give rise to an ordering on $H$, and we will establish some basic properties.

For a hyperplane $h$, we denote the halfspace containing $x_0$ by $h^-$ and denote the halfspace not containing $x_0$ by $h^+$. We refer to these as respectively the inward and outward halfspaces of $h$.
\begin{defn}
For $h,k\in H$, we say that $k$ \emph{separates} $x_0$ from $h$, denoted $k<h$, if $k^+$ strictly contains $h^+$ or equivalently $k^-$ is strictly contained in $h^-$.

If $k<h$ and there is no $j$ in $H$ such that $k<j<h$ then the hyperplane $k$ is said to be a \emph{predecessor} of $h$.

For $h,k\in H$ we say that $h$ is \emph{opposite} $k$ if $h^+,k^+$ are disjoint.
\end{defn}

Note that a hyperplane $k$ separates $x_0$ from $h$ if and only if it separates $x_0$ from all vertices adjacent to $h$.

For any two hyperplanes $h\neq k$, \emph{exactly one} of the following four conditions holds: $h$ intersects $k$, $h<k$, $k<h$ or $h$ opposite $k$. The conditions $h<k$, $k<h$ and $h$ opposite $k$ correspond to the cases where $h^-\cap k^+$, $h^+\cap k^-$, $h^+\cap k^+$ respectively are empty, while $h$ intersects $k$ is the case where none of these are empty.

\begin{lemma}
For $h\in H$, the predecessors of $h$ all meet. In particular $h$ can have at most $d$ predecessors where $d$ is the dimension of $X$.
\end{lemma}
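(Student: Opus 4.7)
The plan is to pick two arbitrary predecessors $k_1$ and $k_2$ of $h$ and apply the four-way alternative stated in the preceding paragraph: exactly one of the relations \emph{intersects}, $k_1<k_2$, $k_2<k_1$, or \emph{opposite} holds for the pair $k_1,k_2$. I intend to rule out the last three possibilities so that $k_1$ and $k_2$ are forced to intersect; the bound on the number of predecessors will then follow immediately from the characterisation of the dimension as the maximum size of a pairwise-intersecting family of hyperplanes.

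For the comparability cases, if $k_1<k_2$ then combining with $k_2<h$ yields $k_1<k_2<h$, violating the defining property of $k_1$ as a predecessor of $h$; the case $k_2<k_1$ is symmetric. For the opposite case, I would unpack definitions: the relation $k_i<h$ means that $h^+$ is strictly contained in $k_i^+$, so if $k_1$ were opposite $k_2$ then
\[
h^+\subseteq k_1^+\cap k_2^+=\emptyset,
\]
contradicting the non-emptiness of the halfspace $h^+$. Thus the only surviving alternative is that $k_1$ and $k_2$ intersect.

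Having shown the predecessors of $h$ pairwise intersect, the fact that $X$ has dimension $d$ (the maximum size of a pairwise-intersecting collection of hyperplanes) forces their number to be at most $d$. There is no serious obstacle here beyond careful bookkeeping with the inward/outward halfspace conventions; the only point worth flagging is the opposite case, where one must remember that $k<h$ is defined via strict inclusion of outward halfspaces, so $h^+\subseteq k_i^+$ really does hold simultaneously for both $i=1,2$, which is precisely what forces the contradiction with opposition.
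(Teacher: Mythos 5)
Your proof is correct and follows essentially the same route as the paper: rule out comparability of two predecessors using transitivity of $<$ and the definition of predecessor, and rule out opposition using the inclusion $h^+\subseteq k_i^+$. The paper phrases the opposite case slightly differently (showing any hyperplane opposite a predecessor is also opposite $h$, hence not itself a predecessor), but the logical content is identical, and your conclusion from the dimension bound matches as well.
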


\begin{proof}
We first observe that two predecessors of a hyperplane $h$ cannot be opposite. Suppose $k_1$ is a predecessor of $h$ and consider a hyperplane $k_2$ which is opposite $k_1$. Then $k_1^+$ contains $h^+$ but is disjoint from $k_2^+$. Thus $h^+$ must also be disjoint from $k_2^+$ i.e.\ $k_2$ is also opposite $h$, so it is not a predecessor of $h$. Now suppose that $k_1,k_2$ are predecessors of $h$. Then we cannot have $k_1<k_2<h$ or $k_2<k_1<h$ as this would contradict the fact that $k_1$ (resp.\ $k_2$) is a predecessor of $h$. Thus if $k_1,k_2$ are predecessors of $h$ then $k_1$ must intersect $k_2$.
\end{proof}

We conclude this section with the following proposition, which gives an alternative characterisation of predecessors.

\begin{prop}\label{H_x}
For $x$ a vertex of $X$, let $H_x$ denote the set of hyperplanes which are adjacent to $x$ and separate $x$ from $x_0$.
\begin{enumerate}
\item Let $h\in H$ and let $x$ be a vertex in $h^+$. Then $x$ is of minimal distance from $x_0$ if and only if $H_x=\{h\}$.

\item Let $h\in H$, and let $y$ be the vertex adjacent to $h$ of minimal distance from $x_0$. That is $y$ is adjacent across $h$ to the vertex $x\in h^+$ of minimal distance from $x_0$. Then $k$ is a predecessor of $h$ if and only if $k\in H_y$.
\end{enumerate}
\end{prop}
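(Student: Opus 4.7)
The plan is to use throughout the identification of the edge-path distance $d(x_0, v)$ with $|S_v|$, where $S_v$ denotes the set of hyperplanes in $H$ separating $x_0$ from $v$, together with the following characterisation: for $h' \in S_v$, the hyperplane $h'$ is adjacent to $v$ if and only if $h'$ is $<$-maximal in $S_v$. This equivalence follows because flipping $v$ across $h'$ yields a consistent orientation precisely when no pairwise halfspace-intersection fails, which by the trichotomy recalled just before the proposition amounts to saying that no $h'' \in S_v$ strictly dominates $h'$.

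For Part (1) forward, any distance-minimal $x \in h^+$ must be adjacent to $h$: on a geodesic from $x_0$ to $x$, if the last edge crossed some hyperplane other than $h$ then its initial endpoint would still lie in $h^+$ but be strictly closer to $x_0$. For any $k \in H_x \setminus \{h\}$, flipping $x$ across $k$ keeps us in $h^+$ (since $k \neq h$) and decreases the distance by one, contradicting minimality, so $H_x = \{h\}$. For the reverse, given any $y \in h^+$ let $m$ be the median of $x_0, x, y$. Then $m \in [x,y] \subset h^+$ and $m \in [x_0, x]$. If $m \neq x$, a geodesic from $x$ to $m$ provides an edge at $x$ crossing some $k \in S_x$; since neither endpoint of this geodesic lies in $h^-$, we have $k \neq h$, contradicting $H_x = \{h\}$. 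Hence $m = x$ and $d(x_0, y) = d(x_0, x) + d(x, y) \geq d(x_0, x)$.

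For Part (2), since $y$ and $x$ differ only across $h$, every hyperplane other than $h$ has $x$ and $y$ on the same side. If $k \in H_y$, I rule out the cases $k = h$, $h < k$, and $k$ opposite $h$ directly from the locations of $x$ and $y$; for $k$ intersects $h$, since $h$ and $k$ are both adjacent to $y$ they span a $2$-cube at $y$ whose opposite vertex $u$ satisfies $S_u = (S_y \setminus \{k\}) \cup \{h\}$, so $u \in h^+$ with $d(x_0, u) = d(x_0, x) - 1$, contradicting the minimality of $x$. This forces $k < h$; and no $j$ with $k < j < h$ can exist, since such $j$ would lie in $S_y$ and violate the $<$-maximality of $k$. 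Conversely, if $k$ is a predecessor of $h$ then $x \in h^+ \subset k^+$ and hence $y \in k^+$, so $k \in S_y$; to verify maximality I suppose $h'' \in S_y$ with $k < h''$ and run the same five-way case split against $h$. Here the only nontrivial case is $h''$ intersects $h$, which is dispatched using Part (1): $h'' \in S_x \setminus \{h\}$ together with $H_x = \{h\}$ forces $h'' < h$, incompatible with intersection. The main obstacle is this `intersects' subcase, handled by two different arguments --- a cube-and-minimality argument one way and an appeal to Part (1) the other; everything else is routine trichotomy bookkeeping.
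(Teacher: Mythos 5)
Your proof is correct, and in Part (1) it is essentially the paper's median argument (you take an arbitrary $y\in h^+$ where the paper takes a minimiser $x'$, but the reasoning is the same). In Part (2) the route is genuinely a little different. You front-load a clean algebraic lemma --- for $h'\in S_v$, adjacency of $h'$ to $v$ is equivalent to $<$-maximality of $h'$ in $S_v$ --- and then run both directions as a trichotomy case-split against $h$, using a square-completion argument for the hard (intersection) subcase in one direction and Part (1) for it in the other. The paper instead argues directly about adjacency: for the ``predecessor $\Rightarrow$ $H_y$'' direction it introduces a hypothetical hyperplane $j$ separating $y$ from $k$ and shows $k<j<h$, and in the reverse direction it likewise rules out an intermediate $j$ by observing that such a $j$ would have to separate $y$ from $h$ or from $k$, which is impossible since $y$ is adjacent to both. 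Your approach replaces these ad hoc geometric separation arguments with the poset-theoretic characterisation of $H_y$, which buys a more uniform and mechanical case analysis at the cost of proving the maximality lemma up front; the paper's approach avoids stating that lemma but pays by needing a small bespoke argument in each direction. One small remark: in your converse direction you observe that $H_x=\{h\}$ forces every $h''\in S_x\setminus\{h\}$ to satisfy $h''<h$; this is fine but uses the finite-poset fact that a unique maximal element in $S_x$ dominates everything, which is worth a word since it is slightly stronger than the literal statement $H_x=\{h\}$.
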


This proposition gives a connection between predecessors and normal cube paths (cf. \cite{NR}). The set $H_x$ is the set of hyperplanes meeting the first cube on the normal cube path from $x$ to $x_0$. The proposition says that the set of predecessors of $h$ is the set of hyperplanes meeting the first cube on the normal cube path from $y$ to $x_0$, where $y$ is the vertex adjacent to $h$ of minimal distance to $x_0$, 

\begin{proof}
For (1) first suppose that $x$ is of minimal distance from $x_0$. Since each $k$ in $H_x$ separates $x_0,x$, crossing any of these reduces the distance to $x_0$. If there were any $k\neq h$ in $H_x$, then crossing this we would remain in $h^+$. Hence by minimality of $d(x_0,x)$, and since $H_x$ is non-empty, it follows that $H_x=\{h\}$.

Conversely suppose that $H_x=\{h\}$. Choose $x'\in h^+$  minimising $d(x_0,x')$. Let $m$ be the median of $x,x',x_0$. Then $m\in h^+$ since $x,x'$ are in $h^+$. Since $m$ lies on a geodesic from $x'$ to $x_0$, minimality of $x'$ implies that $m=x'$. Thus $x'$ lies on a geodesic from $x$ to $x_0$. However since $H_x=\{h\}$ the first edge of every geodesic from $x$ to $x_0$ crosses $h$. Thus we deduce that $x=x'$ and $d(x_0,x)$ is minimal.

For (2) let $k$ be a predecessor of $h$. As $y$ is adjacent to $h$, $k$ does not separate $y$ from $h$, thus $k$ must separate $x_0,y$. If $k$ is not adjacent to $y$ then there is some hyperplane $j$ adjacent to $y$ and separating $y$ from $k$. This hyperplane cannot meet $h$ otherwise it would also be adjacent to $x$ contradicting $H_x=\{h\}$. Since $j$ does not meet $h$ we deduce that $k<j<h$ contradicting the fact that $k$ is a predecessor. We thus deduce that the predecessors of $h$ are adjacent to $y$ and separate $y$ from $x_0$.

Conversely suppose $k$ is adjacent to $y$ and separates $y$ from $x_0$. As above it cannot meet $h$ as this would contradict $H_x=\{h\}$. We have $x\in h^+$, and since $k$ does not separate $x,y$ and $k^+$ contains $y$ we note that $k^+$ also contains $x$. Thus $h,k$ are not opposite. On the other hand $h$ separates $x_0,y$ while $k$ does not, so we deduce that $k<h$. If there were any $j$ with $k<j<h$ then $j$ would either separate $y$ from $h$ or from $k$. Since $y$ is adjacent to both $h,k$ this cannot happen. Hence we conclude that $k$ is a predecessor of $h$.
\end{proof}

\section{Controlled colourings}
\label{sec:colouring}

Let $X$ be a CAT(0) cube complex, and let $H$ denote the set of hyperplanes of $X$. In this section we introduce the concept of a controlled colouring on $H$, and prove that if $X$ is finite dimensional then controlled colourings exist. Throughout we fix a basepoint $x_0$ of $X$.

\begin{defn}
A directed edge from $x$ to $y$ is \emph{inward} if $d(x_0,y)<d(x_0,x)$. Equivalently, the hyperplane separating $x,y$ lies in the set $H_x$ of hyperplanes adjacent to $x$ and separating $x$ from $x_0$. An edge geodesic is \emph{inward} if each edge is inward.
\end{defn}

\begin{defn}
Given a colouring $c:H\to \{0,1\}$, a geodesic is \emph{monochromatic} if it only crosses hyperplanes of a single colour.
\end{defn}

\begin{defn}
An \emph{$l$-controlled colouring of $H$} is a map $c:H\to \{0,1\}$ such that no monochromatic inward geodesic has length greater than $l$.
\end{defn}

We will construct a colouring which is $l$-controlled where the constant $l$ depends only on the dimension of the CAT(0) cube complex $X$.

To construct the colouring we will introduce a collection of \emph{rank} functions on the hyperplanes. These in some sense measure the distance from a hyperplane to the basepoint $x_0$. Having established various important properties of these rank functions we will combine them into a rank-vector associated to each hyperplane, and it is this rank-vector which will allow us to define the colouring.

\begin{defn}
Let $d>1$. An \emph{outward (resp.\ inward) $d$-corner} in $X$ is an intersection $\cap_{i=1}^d h_i^+$ (resp.\ $\cap_{i=1}^d h_i^-$), where all the hyperplanes $h_i$ cross.
\end{defn}

By convention a $d$-corner means an outward $d$-corner unless otherwise specified.

\begin{defn}
The \emph{$d$-rank} of a hyperplane $h$ is defined inductively as follows. Let $H^d_{\geq 0}=H$. Having defined $H^d_{\geq n}$, define $H^d_{\geq n+1}$ to be the set of hyperplanes in $H^d_{\geq n}$ which are contained in some $d$-corner bounded by hyperplanes in $H^d_{\geq n}$. Define a hyperplane $h$ to have $d$-rank $n$ if $h$ lies in $H^d_{\geq n}\setminus H^d_{\geq n+1}$, that is $h\in H^d_{\geq n}$ but $h$ is not contained in any $d$-corner of hyperplanes in $H^d_{\geq n}$.
\end{defn}

In other words, $h$ has $d$-rank 0 if it is not contained in any $d$-corner; $h$ has $d$-rank 1 if it is contained in a $d$-corner of hyperplanes of $d$-rank 0, but not in any $d$-corner of hyperplanes which themselves are contained in a $d$-corner, etc. The example in Fig.\ \ref{fig2d} shows the 2-ranks of a negatively curved 2-dimensional CAT(0) cube complex.

\begin{figure}
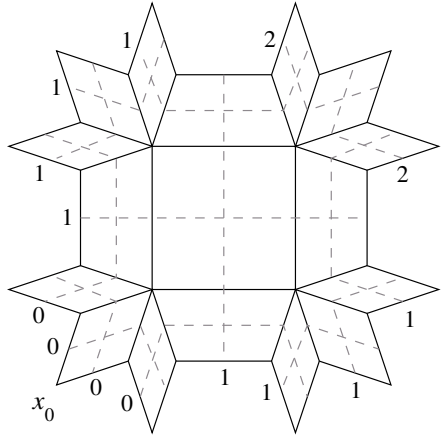

\incgraphics{2d-6-2}
\caption{2-ranks in the presence of negative curvature.}
\label{fig2d}
\end{figure}

As noted in section \ref{prelim}, associated to a subset $\Hhat$ of $H$ there is a CAT(0) cube complex $\Xhat$ which is a quotient of $X$, and whose hyperplanes are precisely $\Hhat$. An alternative way of formulating the definition of $d$-rank is to say that $h$ has $d$-rank 0 if it does not lie in any $d$-corner, and inductively $h$ has $d$-rank $n$ if it has $d$-rank 0 as a hyperplane of the quotient $X^d_{\geq n}$ whose hyperplanes are $H^d_{\geq n}$.

\begin{defn}
A $(d,n)$-corner is a $d$-corner bounded by hyperplanes of $d$-rank $n$.
\end{defn}

It is immediate from the definition that $\drank h=n$ implies $h$ lies in no $(d,n)$-corner. On the other hand if $n>0$ then consider a minimal $k$ in $H^d_{\geq n}$ with $k\leq h$. As $k\in H^d_{\geq n}$ it must lie in a $d$-corner of $H^d_{\geq n-1}$, and $h$ also lies in this corner. By minimality of $k$ the hyperplanes bounding this corner cannot lie in $H^d_{\geq n}$, so they are all of rank $n-1$. Thus if $\drank h=n>0$ then $h$ lies in a $(d,n-1)$-corner. Moreover applying this to the hyperplanes bounding the $(d,n-1)$-corner we conclude that for every $m<n$ there is a $(d,m)$-corner containing $h$.

\begin{lemma}
\label{drank-monot}
The $d$-rank is monotonic, that is for $k<h$ we have $\drank k \leq \drank h$.
\end{lemma}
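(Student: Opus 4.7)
The plan is to prove the equivalent statement by induction on $n$: if $k \in H^d_{\geq n}$ and $k < h$, then $h \in H^d_{\geq n}$. Taking $n = \drank k$ yields $\drank h \geq \drank k$. The base case $n = 0$ is trivial since $H^d_{\geq 0} = H$.

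For the inductive step, assume the claim at level $n$ and take $k \in H^d_{\geq n+1}$ with $k < h$. By definition $k$ lies in some $d$-corner $\bigcap_{i=1}^d h_i^+$ with $h_1, \dots, h_d \in H^d_{\geq n}$ pairwise intersecting, and the strategy is to show $h$ lies in this same corner. For each $i$, the fact that $k$ crosses the corner rules out $k = h_i$, $k < h_i$, and $k$ opposite $h_i$ (each would force $k^- \cap \bigcap_j h_j^+ = \emptyset$, contradicting the crossing condition), leaving $h_i < k$ or $k$ intersects $h_i$. In the first case, $h_i < k < h$ gives $h_i < h$. In the second, $k < h$ excludes $h = h_i$ and $h < h_i$ (both would imply $k < h_i$, incompatible with $k$ intersecting $h_i$), leaving $h > h_i$, $h$ intersects $h_i$, or $h$ opposite $h_i$ to consider.

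The crux, and the main obstacle I expect, is excluding the opposite case. I plan to examine the $2$-face $C$ where $k$ and $h_i$ cross, labelling its four vertices $u_{\varepsilon_1, \varepsilon_2}$ by their $(k, h_i)$-signs. Under the opposition hypothesis one has $h^+ \subseteq h_i^- \cap k^+$, and together with the edge-by-edge observation that every edge of $C$ crosses only $k$ or $h_i$ but not $h$, all four vertices of $C$ are forced into $h^-$ (since $u_{+,+} \in h_i^+ \subseteq h^-$ and the $h$-side is preserved along each edge). A vertex $v' \in h^+$ adjacent to $h$ then necessarily lies in $k^+ \cap h_i^-$ with $h \in H_{v'}$, and the pairwise-intersecting property of the adjacency sets $H_v$ prevents $k$ from also being adjacent to $v'$ (since $k < h$ forbids $k$ and $h$ from intersecting). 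The tension between $k$'s role as a separator of $x_0$ and $h^+$ and this non-adjacency is intended to produce the desired contradiction.

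Once opposition is ruled out, for every $i$ we have $h > h_i$ or $h$ intersects $h_i$. The crossing of the corner by $h$ is then verified directly: $h^- \cap \bigcap_i h_i^+ \supseteq k^- \cap \bigcap_i h_i^+ \neq \emptyset$, and the common cube of $h$ together with those $h_i$ it intersects contains a vertex $v \in h^+$ sitting in $h_i^+$ for every $i$ (for intersecting $h_i$ directly, and for the remaining $h_i$ with $h > h_i$ via $h^+ \subseteq h_i^+$). Hence $v \in h^+ \cap \bigcap_i h_i^+$, so $h$ crosses the corner and $h \in H^d_{\geq n+1}$, closing the induction. My main concern is the robustness of the opposition argument; if the adjacency-of-$k$ step cannot be pushed through cleanly, an alternative may be to build a $d$-corner for $h$ from its predecessors (using Proposition~\ref{H_x} and the predecessor lemma) together with suitable intersecting hyperplanes, invoking the inductive hypothesis on each $h_i$ to keep the replacements in $H^d_{\geq n}$.
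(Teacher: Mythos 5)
Your proof misreads the definition of a hyperplane being ``contained in a $d$-corner.'' In the paper's usage, $k$ lies in the corner $S=\bigcap_{i=1}^d h_i^+$ precisely when $h_i<k$ for every bounding hyperplane $h_i$; equivalently $k^+\subsetneq S$, equivalently the whole carrier of $k$ sits inside $S$. Your interpretation---that both $k^+\cap S$ and $k^-\cap S$ are nonempty---is strictly weaker, and it is exactly this weakening that admits the spurious case ``$k$ intersects $h_i$'' and drives you into the unresolved opposition argument (which you acknowledge you have not closed). You can confirm the stronger reading from the paper's other uses of the notion: the proof of Proposition~\ref{flat-pred} treats a hyperplane with $d+1$ predecessors $k_1,\dots,k_{d+1}$ as lying in the $(d+1)$-corner $\bigcap k_i^+$, which is only sensible because $k_i<h$ for all $i$; and the sentence just before the lemma---``if $n>0$ then consider a minimal $k\in H^d_{\ge n}$ with $k\le h$ \dots and $h$ also lies in this corner''---again relies on the same containment argument.

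Under the correct reading the lemma is immediate and there is no real induction to do: if $h_i<k$ for all $i$ and $k<h$, then $h_i<h$ for all $i$ by transitivity of $<$, so every $d$-corner (at any rank level) that contains $k$ also contains $h$. The paper's proof is exactly this one-line observation, packaged by noting that $\drank h$ is the largest $n$ for which $h$ lies in some $(d,n-1)$-corner. Your case analysis in the inductive step, and in particular the attempt to rule out $h$ opposite $h_i$ via the square where $k$ and $h_i$ cross, is therefore both unnecessary and, as you flag, incomplete; the ``alternative'' sketch via predecessors is likewise not needed once the definition is read correctly.
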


\begin{proof}
By the above observations we note that the $d$-rank of a hyperplane is the maximal $n$ for which the hyperplane lies in a $(d,n-1)$-corner, or zero if there is no such $n$. If $k<h$ then every $d$-corner containing $k$ also contains $h$, hence the $d$-rank of $h$ is at least the $d$-rank of $k$.
\end{proof}

If $k$ is a predecessor of $h$ then $\drank k \leq \drank h$. In the special case that $k$ is the only predecessor of $h$ then we have equality. Indeed any $d$-corner containing $k$ also contains $h$, but conversely, for any $d$-corner containing $h$, each hyperplane $j$ bounding the corner must satisfy $j< k< h$. Thus a $d$-corner contains $k$ if and only if it contains $h$.

\begin{defn}
A CAT(0) cube complex $X$ is \emph{$d$-flat} if every hyperplane of $X$ has $d$-rank 0. Equivalently $X$ is $d$-flat if no hyperplane of $X$ lies in a $d$-corner.
\end{defn}

The term `flat' is motivated by the observation that $\R^n$ with the standard cubing is $d$-flat for all $d\geq 2$. Note also that $X$ is $d$-flat for all $d$ greater than the dimension of $X$.

\begin{defn}
The \emph{flatness} of $X$ is the least $d$ such that $X$ is $(d+1)$-flat.
\end{defn}

If $X$ has dimension $D$ then it is $(D+1)$-flat, hence the flatness is bounded above by the dimension.

\begin{prop}
\label{flat-pred}
If $X$ is $(d+1)$-flat then each hyperplane $h$ of $X$ has at most $d$ predecessors and every predecessor has $\drank$ at least $\drank(h)-1$.
\end{prop}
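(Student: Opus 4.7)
Both claims will be established by the same manoeuvre: assuming the conclusion fails, I exhibit a $(d+1)$-corner containing $h$ and thereby contradict $(d+1)$-flatness.

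For the bound on predecessors, if $h$ had distinct predecessors $k_1,\dots,k_{d+1}$, then by the earlier lemma of Section \ref{prelim} these would pairwise intersect; and since each $k_i$ separates $x_0$ from every vertex adjacent to $h$, the hyperplane $h$ would be contained in the $(d+1)$-corner $\bigcap_{i=1}^{d+1}k_i^+$, contradicting $(d+1)$-flatness.

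For the rank inequality, fix a predecessor $k$ of $h$ and write $m=\drank k$. Suppose for contradiction that $\drank h\geq m+2$, so $h\in H^d_{\geq m+2}$. By the observations following the definition of $d$-rank, $h$ then lies in a $(d,m+1)$-corner: there exist pairwise intersecting hyperplanes $h_1,\dots,h_d$, each of $d$-rank at least $m+1$, with $h\subseteq\bigcap_{i=1}^d h_i^+$. Each $h_i$ must satisfy $h_i<h$, since the inclusion $h\subseteq h_i^+$ rules out $h_i$ crossing $h$ as well as the configurations $h<h_i$ and $h$ opposite $h_i$. I now run the four-way classification on each pair $(k,h_i)$: equality is impossible because $\drank k=m<\drank h_i$; $k<h_i$ would give $k<h_i<h$, contradicting $k$ being a predecessor of $h$; $h_i<k$ forces $\drank h_i\leq m$ by monotonicity (Lemma \ref{drank-monot}), against $\drank h_i\geq m+1$; and $k$ opposite $h_i$ would yield $h^+\subseteq k^+\cap h_i^+=\emptyset$, which is absurd. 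Hence $k$ crosses every $h_i$, so $\{k,h_1,\dots,h_d\}$ is a pairwise intersecting family of size $d+1$, and $h\subseteq k^+\cap\bigcap_{i=1}^d h_i^+$ sits inside a $(d+1)$-corner, contradicting $(d+1)$-flatness.

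The principal obstacle is spotting the right witness for the second part: one must recognise that whenever $h$ lies in a $(d,m+1)$-corner, the predecessor $k$ can be adjoined to the bounding hyperplanes to manufacture a $(d+1)$-corner around $h$. Once that observation is in hand, the remainder is a routine case analysis exhausting the trichotomy for pairs of distinct hyperplanes, where the crucial inputs are the predecessor condition on $k$ (to rule out $k<h_i$) and the monotonicity of $d$-rank (to rule out $h_i<k$).
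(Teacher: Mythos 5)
Your proof is correct and follows essentially the same strategy as the paper: the bound on predecessors comes from the lemma that predecessors pairwise intersect, and the rank inequality comes from exhibiting a $(d,*)$-corner containing $h$, running the four-way case analysis on $k$ against each bounding hyperplane, and forcing $k$ to cross all of them to produce a forbidden $(d+1)$-corner. The only cosmetic difference is that you argue by contradiction starting from a $(d,\drank k+1)$-corner, whereas the paper argues directly from a $(d,\drank h-1)$-corner and reads off the conclusion from the surviving cases $j\leq k$; both hinge on the same monotonicity and predecessor observations.
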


\begin{proof}
If $X$ is $(d+1)$-flat then no hyperplane lies in a $(d+1)$-corner, so in particular no hyperplane can have more than $d$ predecessors.

Let $n=\drank h$. If $n=0$ then there is nothing to prove. Otherwise $h$ lies in some $(d,n-1)$-corner $S$. If $k$ is a predecessor of $h$ then no hyperplane $j$ bounding $S$ can separate $h,k$. If for some $j$ we have $j< k$ or $j=k$ then by monotonicity we deduce that $\drank k \geq n-1$ as required. This leaves the possibility that $k$ crosses every $j$ bounding $S$. But then intersecting $S$ with $k^+$ we obtain a $(d+1)$-corner containing $h$. This contradicts $(d+1)$-flatness.
\end{proof}

\begin{defn}
A \emph{chain} in a path $s$ is a set of hyperplanes meeting $s$, which is totally ordered by $<$.
\end{defn}

\begin{defn}
An inward geodesic $s$ is \emph{straight} if the set of hyperplanes crossed by the geodesic is a chain in $s$, i.e.\ the hyperplanes are pairwise disjoint.
\end{defn}

If $h$ has more than one predecessor, then there is an \emph{inward} corner associated to $h$, \emph{viz}.\ the intersection $k_1^-\cap\dots\cap k_p^-$ where $k_1,\dots,k_p$ are the predecessors of $h$. Let $\i_h$ denote this inward corner. By convention we take $\i_h$ to be empty when $h$ has at most one predecessor.

Note that if $s$ is a straight path then $s$ is disjoint from $\i_h$ for each $h$ meeting $s$. Indeed for $s$ to have non-empty intersection with $\i_h$ then in particular it must cross every predecessor of $h$, contradicting straightness. Thus the following definition gives a generalisation of straightness.

\begin{defn}
An inward geodesic $s$ is \emph{bound to $h$} if it is disjoint from $\i_h$, and is \emph{bound} if it is bound to $h$ for all $h$ meeting $s$.
\end{defn}

If $s$ is bound to $h$ and $h$ has more than one predecessor then there is at least one predecessor which $s$ does not cross. Conceptually, $s$ is bound to $h$ if there is at least one direction in which $s$ goes no further towards $x_0$ than the vertex $x$ adjacent to $h$ minimising $d(x_0,x)$. See Fig.\ \ref{figbound}.

\begin{figure}
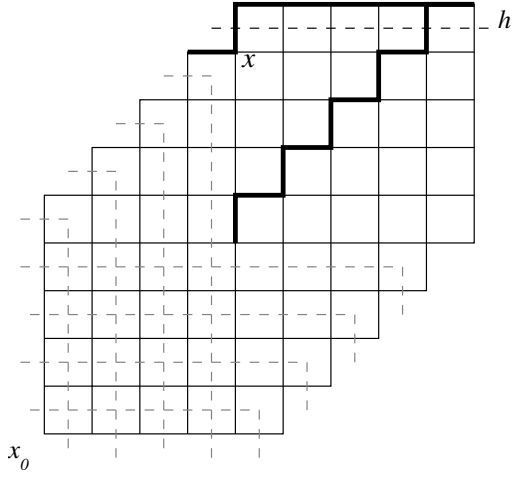

\incgraphics{bound}
\caption{Bound geodesics and the corners they are disjoint from.}
\label{figbound}
\end{figure}

Ultimately we wish to construct a colouring for which the length of monochromatic geodesics is bounded. Our colouring will have the property that monochromatic geodesics are bound, so we will control the length of these by first controlling the variation in rank that can occur on a bound geodesic.

\begin{prop}
\label{straight-path}
\label{q-straight-path}
Let $X$ be a $(d+1)$-flat CAT(0) cube complex. If $s$ is a bound inward geodesic and $C$ is a chain in $s$ then any two hyperplanes in $C$ have $d$-ranks differing by at most 2. In particular a straight inward geodesic in $X$ crosses hyperplanes whose $d$-ranks differ by at most 2.
\end{prop}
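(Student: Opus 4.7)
The plan is to reduce to maximal chains on $s$ and then track $d$-rank drops as the chain is traversed from $h_m$ downward. First I would enlarge $C$ to a maximal chain among chains of hyperplanes crossed by $s$; monotonicity of $d$-rank (Lemma \ref{drank-monot}) makes this reduction harmless. In such a maximal chain $h_1<h_2<\cdots<h_m$, consecutive elements $h_l<h_{l+1}$ must in fact be predecessors: any hyperplane $j$ with $h_l<j<h_{l+1}$ would have to be crossed by $s$ (which passes through $h_{l+1}^+\subseteq j^+$ before $h_{l+1}$ and ends in $h_l^-\subseteq j^-$ after $h_l$), so $j$ could be added to the chain on $s$, contradicting maximality. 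By Proposition \ref{flat-pred} this gives $\drank h_l\geq \drank h_{l+1}-1$, and by the remark after Lemma \ref{drank-monot}, equality holds when $h_l$ is the unique predecessor of $h_{l+1}$. So along the chain the $d$-rank decreases monotonically, by at most $1$ at each step, and a drop can only occur at an $h_{l+1}$ with multiple predecessors.

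At each such drop-point I would invoke the bound hypothesis. With multiple predecessors $k_1,\dots,k_p$ of $h_{l+1}$, the inward corner $\i_{h_{l+1}}=\bigcap_i k_i^-$ is nonempty and disjoint from $s$; since $s$ is a geodesic (crossing each hyperplane at most once) and starts in $h_{l+1}^+\subseteq \bigcap_i k_i^+$, there must be a predecessor $k$ that $s$ fails to cross, and hence $s\subseteq k^+$. A short case check then shows every chain element $h_j$ with $j\leq l$ crosses $k$: the possibility $h_j<k$ is ruled out because $s$ entering $h_j^-$ would land in $k^-$; $k<h_j$ would give $k<h_j<h_{l+1}$ and violate the predecessor property of $k$; opposite is excluded by $h_{l+1}^+\subseteq h_j^+\cap k^+$; and equality fails because $k\notin s$. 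This provides, at each drop, an auxiliary hyperplane $k$ off $s$, of $d$-rank at least $\drank h_{l+1}-1$, sitting as a predecessor of $h_{l+1}$ and crossing every earlier chain element.

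The final step is to conclude that at most two such drops can occur along $C$. Suppose for contradiction that drops occur at indices $l_1>l_2>l_3$, producing auxiliary predecessors $k^{(1)},k^{(2)},k^{(3)}$ off $s$, with $k^{(i)}$ a predecessor of $h_{l_i+1}$ crossing every chain element strictly below $h_{l_i+1}$. The predecessor property of each $k^{(i)}$ rules out the nesting $k^{(j)}<k^{(i)}$ for $j<i$ (this would place $k^{(j)}$ strictly between $k^{(i)}$ and $h_{l_i+1}$), so the $k^{(i)}$'s are pairwise either crossing or nested in the reverse direction, and each crosses the relevant chain elements. Combining these with the pairwise crossing predecessors of $h_{l_1+1}$ supplied by the lemma preceding Proposition \ref{H_x}, one assembles $d+1$ pairwise crossing hyperplanes, all less than $h_{l_1+1}$, giving a $(d+1)$-corner containing $h_{l_1+1}$ and contradicting $(d+1)$-flatness. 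The main obstacle I expect is precisely this last assembly: one must reconcile the varying guaranteed ranks of the $k^{(i)}$'s with their crossings against the chain and with the predecessors of each $h_{l_i+1}$, to extract cleanly a $(d+1)$-corner from the combinatorial data.
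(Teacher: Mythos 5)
Your opening moves --- passing to a maximal chain, noting that consecutive elements are then predecessors, that the $d$-rank therefore decreases by at most one per step by Proposition \ref{flat-pred} and is preserved at any step with a unique predecessor, and then extracting at each drop a predecessor $k$ missed by $s$ with $s\subseteq k^+$ --- all match the paper's setup and are correct. Your verification that such a $k$ crosses every smaller chain element is also sound.

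However, the argument diverges from the paper at the crucial final step, and the divergence introduces a genuine gap, not merely a detail to fill in. You try to bound the number of drops by three and then extract $d+1$ pairwise crossing hyperplanes from the auxiliary predecessors $k^{(1)},k^{(2)},k^{(3)}$. There are two problems. First, you have not ruled out the nesting $k^{(i)}<k^{(j)}$ for $j<i$: the predecessor property of $k^{(i)}$ excludes hyperplanes strictly between $k^{(i)}$ and $h_{l_i+1}$, but $k^{(j)}$ need not lie there (indeed $k^{(j)}$ crosses $h_{l_i+1}$, so it is certainly not below it), so the $k^{(i)}$'s are not known to pairwise cross. (Your parenthetical for the opposite case $k^{(j)}<k^{(i)}$, $j<i$, is also worded backwards: what that nesting actually does is place $k^{(i)}$ strictly between $k^{(j)}$ and $h_{l_j+1}$, which is the intended contradiction.) Second, even granting that all three $k^{(i)}$'s pairwise cross, there is no visible route to $d+1$ pairwise crossing hyperplanes: you would need $k^{(2)}$ and $k^{(3)}$ to cross all the predecessors of $h_{l_1+1}$, and nothing in your construction forces that.

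The paper sidesteps this assembly problem entirely. Having located the \emph{first} $h_i$ with more than one predecessor (which by the unique-predecessor rank preservation has $d$-rank $n$, the maximum on the chain), it uses the fact that the predecessor $k$ missed by $s$ satisfies $\drank k\geq n-1$, so by the discussion preceding Lemma \ref{drank-monot}, $k$ lies in a $(d,n-2)$-corner $S$. Each of the $d$ bounding hyperplanes $j$ of $S$ satisfies $j<k$, whence $j^+\supseteq k^+\supseteq s$. Now if some $l\in C$ had $\drank l\leq n-3$, monotonicity would give $j\not<l$, while $l$ separates points of $s\subseteq j^+$; together these force $l$ to cross every bounding $j$, so $S\cap l^+$ is a $(d+1)$-corner containing $h_i$, contradicting $(d+1)$-flatness. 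The $(d+1)$st hyperplane of the corner is thus the low-rank chain element $l$ \emph{itself}, not an auxiliary predecessor from a later drop, and only the first drop-point is ever examined. This use of the $(d,n-2)$-corner containing $k$ is the structural insight your sketch is missing, and is what makes the bound of $2$ come out cleanly.
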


\begin{proof}
Without loss of generality we may assume that $C$ is a maximal chain in $s$. Hence enumerating $C$ as $h_0>h_1>\dots>h_p$, for $i=0,1,\dots,p-1$, the hyperplane $h_{i+1}$ is a predecessor of $h_i$. Let $n$ be the $d$-rank of $h_0$. By monotonicity, each $h_i$ has $d$-rank at most $n$, thus we may assume $n>2$, otherwise there is nothing to prove.

If for each $i=0,1,\dots,p-1$, $h_{i+1}$ is the unique predecessor of $h_i$ then every hyperplane in $C$ has rank $n$ and we are done. Otherwise, consider the least $i$ such that $h_i$ has more than one predecessor, and note that $h_i$ has $d$-rank $n$. As $s$ is bound, there exists a predecessor $k$ of $h_i$, with $k$ not meeting $s$. By Proposition \ref{flat-pred} the $d$-rank of $k$ is at least $n-1$, hence $k$ lies in a $(d,n_i-2)$-corner $S$. We note that $h_i$ lies in $k^+$ which is contained in $S$, so $h_i$ also lies in the $d$-corner $S$.

Now suppose there exists a hyperplane $l$ in $C$ with $\drank l \leq n_i-3$. Let $j$ be one of the hyperplanes bounding $S$. We cannot have $j<l$ by monotonicity, since $\drank j=n-2$ and $\drank l\leq n-3$. On the other hand, since $k$ does not meet $s$ we know that $s$ lies in $k^+$ which is contained in $j^+$. Hence $l$ meets $j^+$, but is not contained in it, so $l$ and $j$ must intersect. Since this holds for every $j$ bounding $S$, we have a $(d+1)$-corner $S\cap l^+$ containing $h_i$. This contradicts $(d+1)$-flatness, so we conclude that every element of $C$ has $d$-rank at least $n-2$.

In the case where $s$ is straight, the set $C$ of all hyperplanes meeting $s$ is a chain, hence hyperplanes crossing $s$ have $d$-ranks differing by at most 2.
\end{proof}

\medskip
Let $D$ be the dimension of $X$. For a hyperplane $h$ the $d$-ranks for $d=D,D-1,\dots, 2$ give progressively finer information about $h$. To make this precise, we associate to each hyperplane a \emph{rank-vector}.

Starting with $X$, let $H_{(n_D)}$ denote the set of hyperplanes of $X$ with $D$-rank $n_D$. Associated to this is a quotient $X_{(n_D)}$ of $X$, whose hyperplanes are $H_{(n_D)}$. This is $D$-flat by construction, since any hyperplane lying in a $D$-corner of $D$-rank $n_D$ hyperplanes must have $D$-rank greater than $n_D$. We note that $H=\bigsqcup\limits_{n_D} H_{(n_D)}$.

Now let $H_{(n_D,n_{D-1})}$ denote the set of hyperplanes of $X_{(n_D)}$ with $(D-1)$-rank $n_{D-1}$, and let $X_{(n_D,n_{D-1})}$ be the associated $(D-1)$-flat complex. We have $H_{(n_D)}=\bigsqcup\limits_{n_{D-1}} H_{(n_D,n_{D-1})}$ and hence $H=\bigsqcup\limits_{n_D,n_{D-1}} H_{(n_D,n_{D-1})}$. We repeat this process to produce $H_{(n_D,n_{D-1},\dots, n_2)}$ and $X_{(n_D,n_{D-1},\dots, n_2)}$, with $H=\bigsqcup\limits_{n_D,n_{D-1},\dots, n_2} H_{(n_D,n_{D-1},\dots, n_2)}$.

\begin{defn}
The \emph{rank-vector} of $h$ is $\r(h)=(n_D,\dots, n_2)$ if $h$ lies in $H_{(n_D,\dots, n_2)}$.
\end{defn}

Figure \ref{figpgl} illustrates the concept of rank vectors for the example of the $PGL_2(\Z)$ CAT(0) cube complex from \cite{CN}.

\begin{figure}
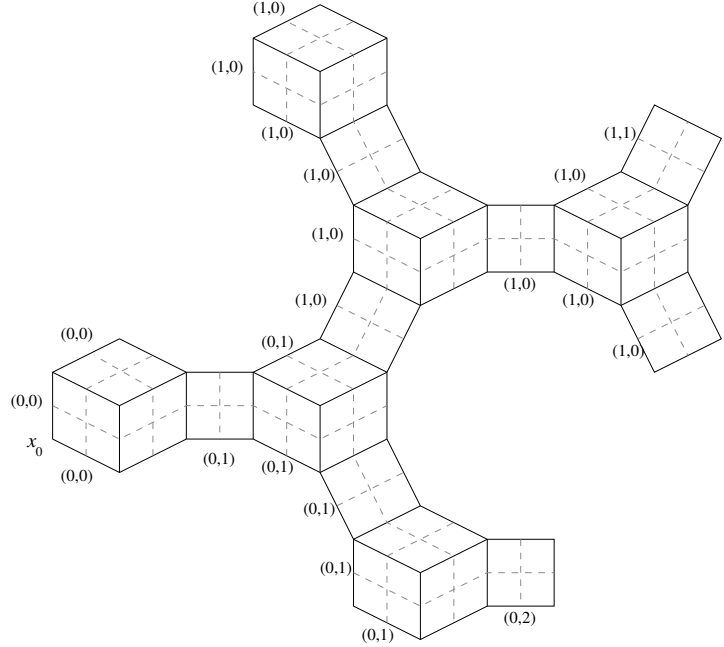

\incgraphics{pgl2Z-2}
\caption{Rank vectors for the $PGL_2(\Z)$ CAT(0) cube complex from \cite{CN}.}
\label{figpgl}
\end{figure}

We equip the set of rank-vectors with the lexicographic order.

\begin{lemma}
\label{rv-monot}
The rank-vector function is monotonic.
\end{lemma}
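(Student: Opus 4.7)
The plan is to induct on the coordinates of $\r$ in lexicographic order, applying Lemma \ref{drank-monot} at each level of the nested quotient construction used to define the rank-vector. The key auxiliary fact is that the relation $<$ descends to the quotients: if $\Hhat \subseteq H$ contains both $k$ and $h$ and $k<h$ in $X$, then $k<h$ in $\Xhat$ (taking as basepoint the image of $x_0$). Indeed, as recalled in the preliminaries, a halfspace of $\Xhat$ is just the set of vertices of $\Xhat$ whose defining family of orientations chooses the prescribed side, so the inclusion $k^-\subseteq h^-$ persists after quotienting. Strictness is preserved because any vertex of $X$ lying in $h^-\setminus k^-$ projects to a vertex of $\Xhat$ lying in the image of $h^-$ but not of $k^-$.

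With this observation in hand, I would argue as follows. Suppose $k<h$. By Lemma \ref{drank-monot} applied in $X$, the $D$-ranks satisfy $\drank k \leq \drank h$. If this inequality is strict, then $\r(k)<\r(h)$ lexicographically already in the first coordinate and we are done. Otherwise $\drank k = \drank h = n_D$, so both hyperplanes persist as hyperplanes of the quotient $X_{(n_D)}$, and by the auxiliary observation the relation $k<h$ still holds in $X_{(n_D)}$. Now apply Lemma \ref{drank-monot} inside $X_{(n_D)}$ with the $(D-1)$-rank: either the inequality is strict (so $\r(k)<\r(h)$ is decided at the second coordinate), or both hyperplanes lie in $H_{(n_D,n_{D-1})}$ and we pass to $X_{(n_D,n_{D-1})}$ and repeat.

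Iterating at most $D-1$ times, we either detect a coordinate on which $\r(k)$ is strictly less than $\r(h)$, or else the two rank-vectors agree in every coordinate, giving $\r(k)=\r(h)$. In either case $\r(k)\leq \r(h)$ lexicographically, as required.

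The only nontrivial point is the auxiliary observation that the strict inclusion $k^-\subsetneq h^-$ survives the quotient construction; everything else is a direct lexicographic induction built out of Lemma \ref{drank-monot}. I expect this preservation-of-ordering step to be the main (albeit mild) obstacle, since one must carefully track how halfspaces of $X$ correspond to halfspaces of $\Xhat$ and verify that the witnessing vertices of $h^-\setminus k^-$ are not identified with vertices of $k^-$ under the quotient map.
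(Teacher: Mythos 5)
Your proof is correct and follows essentially the same route as the paper's: a lexicographic induction applying Lemma \ref{drank-monot} at each level of the nested quotient construction. You make explicit a point the paper leaves implicit, namely that the relation $k<h$ in $X$ descends to the quotients $X_{(n_D,\dots)}$, which is indeed the only detail worth checking.
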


\begin{proof}
If $k<h$ then $D\rnk k \leq D\rnk h$. The $D$-rank is the first entry of the rank vector so if the inequality is strict then the the rank-vectors satisfy the same inequality. Otherwise $k,h$ lie in the same $H_{(n_D)}$, and as $h<k$ we have $(D-1)\rnk k \leq (D-1)\rnk h$ in $H_{(n_D)}$. Repeating the argument we either have strict inequality at some point, giving a strict inequality of rank-vectors, or we conclude that $k,h$ lie in the same $H_{(n_D,\dots, n_2)}$, and we have equality of rank vectors.
\end{proof}

\begin{lemma}
\label{unique-max}
A hyperplane $h$ has at most one predecessor with the same rank-vector.
\end{lemma}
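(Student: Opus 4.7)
My plan is to reduce the statement to an application of Proposition \ref{flat-pred} inside the fully refined quotient complex $X_{(n_D,\dots,n_2)}$ determined by the rank-vector $\mathfrak{r}(h)=(n_D,\dots,n_2)$.

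Suppose for contradiction that $k_1\neq k_2$ are two predecessors of $h$ in $X$ both having the same rank-vector as $h$. By definition of the rank-vector, all three hyperplanes $h,k_1,k_2$ lie in $H_{(n_D,\dots,n_2)}$, so they appear as hyperplanes of the iterated quotient cube complex $\Xhat := X_{(n_D,\dots,n_2)}$. I need to check that $k_1$ and $k_2$ are still predecessors of $h$ there. Because the quotient construction retains the halfspace structure on the surviving hyperplanes, every quadrant that is empty in $X$ remains empty in $\Xhat$ and every non-empty quadrant maps to a non-empty one, so the mutual relation of any two hyperplanes (intersecting, $<$ in either direction, or opposite) is unchanged; in particular $k_i<h$ holds in $\Xhat$. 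Moreover, a hyperplane $j\in H_{(n_D,\dots,n_2)}\subset H$ strictly between $k_i$ and $h$ in $\Xhat$ would also lie strictly between them in $X$, contradicting the fact that $k_i$ is a predecessor of $h$ in $X$. Hence $k_1,k_2$ are distinct predecessors of $h$ in $\Xhat$.

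By the iterative construction of the rank-vectors the complex $\Xhat$ is $2$-flat: at each stage $X_{(n_D,\dots,n_d)}$ is made $d$-flat by restricting to hyperplanes of $d$-rank $n_d$, and the process terminates at $d=2$. Applying Proposition \ref{flat-pred} with $d=1$ to $\Xhat$ therefore shows that every hyperplane of $\Xhat$ has at most one predecessor, which forces $k_1=k_2$ and yields the desired contradiction.

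The only point that really needs care is the transfer of the order relation (and hence of the predecessor relation) from $X$ to the iterated quotient $\Xhat$; this is the step I expect to have to justify most carefully, though it follows cleanly from the characterisation of $<$ in terms of empty quadrants together with the fact that the quotient only discards hyperplanes rather than altering the halfspace structure on those that remain.
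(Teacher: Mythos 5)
Your proof is correct and follows the same route as the paper: pass to the fully refined quotient $X_{(n_D,\dots,n_2)}$, observe it is 2-flat, and conclude via the bound on predecessors in a 2-flat complex (Proposition \ref{flat-pred}). The only difference is that you spell out why the order and predecessor relations transfer to the quotient, a step the paper asserts without elaboration.
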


\begin{proof}
Let $(n_D,\dots, n_2)$ be the rank-vector of $h$. Then $h$ is a hyperplane of $X_{(n_D,\dots, n_2)}$. If $k$ is a predecessor of $h$ of the same rank then $k$ is also a hyperplane of $X_{(n_D,\dots, n_2)}$, and indeed must be a predecessor of $h$ in $X_{(n_D,\dots, n_2)}$. But $X_{(n_D,\dots, n_2)}$ is 2-flat, so $h$ has at most 1 predecessor in $X_{(n_D,\dots, n_2)}$.
\end{proof}

\medskip
We are now in a position to define the colouring $c$. We will require two key properties of the colouring.
\begin{enumerate}
\item If $h$ has a predecessor $k$ with $c(h)=c(k)$ then $h$ also has a predecessor $j$ with $c(h)\neq c(j)$.

\item If $h$ has a predecessor $k$ with $\r(h)=\r(k)$ then $c(h)\neq c(k)$.
\end{enumerate}
This is achieved by the following definitions.

\begin{defn}
A predecessor $k$ of $h$ is \emph{$\r$-maximal} if $\r(j)\leq \r(k)$ for all predecessors $j$ of $h$.
\end{defn}

\begin{defn}
The colouring $c(h)$ of a hyperplane $h$ is defined inductively to be
$$\begin{cases}
1 & \text{if all $\r$-maximal predecessors of $h$ are coloured 0}\\
0 & \text{otherwise}.
\end{cases}$$
\end{defn}

The fact that this colouring satisfies (1) above is immediate: if all predecessors of $h$ are coloured 0 then $h$ will be coloured 1, while if all predecessors of $h$ are coloured 1 then $h$ will be coloured 0. To see that (2) is satisfied, if $h$ has a predecessor $k$ with $\r(h)=\r(k)$ then by monotonicity this predecessor is $\r$-maximal. By Lemma \ref{unique-max} there cannot be more than one predecessor having the same rank vector so $k$ is the unique $\r$-maximal predecessor of $h$. Thus $h$ is coloured 1 if $k$ is coloured 0, and $h$ is coloured 0 otherwise.

\medskip

We will now show that monochromatic paths for this colouring are bound in the following strong sense.

\begin{defn}
An inward geodesic $s$ is \emph{totally bound} if $s$ is bound, and the quotient of $s$ in $X_{(n_D,n_{D-1},\dots,n_d)}$ is bound for all $d$, and all $n_D,n_{D-1},\dots,n_d$.
\end{defn}

\begin{prop}
\label{mono-qs}
Let $s$ be a monochromatic inward geodesic in $X$. Then $s$ is totally bound.
\end{prop}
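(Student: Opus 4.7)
The plan is to deduce the result from two ingredients: a strengthened form of property (1) of the colouring, and a lex-rigidity property that pins down which predecessors survive to a quotient.

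First, I would record a strengthened property (1): for every hyperplane $h$ with at least one predecessor there is an $\r$-maximal predecessor $j$ with $c(j)\neq c(h)$. This is immediate from the case analysis defining $c$: if $c(h)=1$ then by definition every $\r$-maximal predecessor is coloured $0$, and if $c(h)=0$ then by definition at least one $\r$-maximal predecessor is coloured $1$.

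Second, I would clarify the predecessor structure in any quotient $X_{(n_D,\dots,n_d)}$. If $k,h\in H_{(n_D,\dots,n_d)}$ with $k<h$ and $j\in H$ satisfies $k<j<h$, then Lemma \ref{rv-monot} gives $\r(k)\leq_{\mathrm{lex}}\r(j)\leq_{\mathrm{lex}}\r(h)$; since $\r(k)$ and $\r(h)$ share the prefix $(n_D,\dots,n_d)$, so must $\r(j)$, forcing $j\in H_{(n_D,\dots,n_d)}$. Hence the predecessors of $h$ in $X_{(n_D,\dots,n_d)}$ are precisely its $X$-predecessors lying in $H_{(n_D,\dots,n_d)}$. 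The crucial upgrade is that, whenever $h\in H_{(n_D,\dots,n_d)}$ has any quotient predecessor, every $\r$-maximal $X$-predecessor of $h$ already lies in $H_{(n_D,\dots,n_d)}$: any $X$-predecessor outside this set has, by monotonicity, a strictly lex-smaller prefix than $(n_D,\dots,n_d)$, and so cannot be $\r$-maximal once a quotient predecessor --- which has prefix $(n_D,\dots,n_d)$ --- is available.

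To finish, I would fix an arbitrary intermediate quotient $\bar X = X_{(n_D,\dots,n_d)}$ (and treat the statement that $s$ is bound in $X$ as the limiting empty-prefix case) and a hyperplane $h$ of $\bar X$ met by the projected geodesic $\bar s$. If $h$ has fewer than two predecessors in $\bar X$, then $\i_h$ in $\bar X$ is empty by convention and there is nothing to check. Otherwise the strengthened property (1) combined with the previous paragraph supplies an $\r$-maximal $X$-predecessor $j$ of $h$ that actually lies in $\bar X$ and satisfies $c(j)\neq c(h)$. Monochromaticity of $s$ prevents $s$ from crossing $j$, and since $s$ enters $h^+\subseteq j^+$ just before its $h$-crossing edge, it stays in $j^+$ throughout, so it never meets $\i_h\subseteq j^-$ (in $\bar X$). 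Thus $\bar s$ is bound in $\bar X$; applied to every prefix (including the empty one which covers $X$ itself) this gives that $s$ is totally bound.

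The main obstacle I anticipate is reconciling the colouring with quotienting: a priori, quotienting could introduce completely new predecessors and destroy the two defining properties of $c$, but the lex-monotonicity of the rank-vector rules this out and preserves the feature --- visible in every relevant quotient --- that some $\r$-maximal predecessor carries the opposite colour, which is exactly what forces $s$ to miss the inward corner.
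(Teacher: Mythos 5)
Your proof is correct and takes essentially the same route as the paper: isolate an $\r$-maximal $X$-predecessor of opposite colour, verify via lex-monotonicity of $\r$ that this predecessor survives into the relevant quotient, and conclude that the (quotiented) geodesic stays on the $j^+$ side and so misses $\i_h$. Your writeup is somewhat more explicit than the paper's --- you spell out that having an uncrossed predecessor forces disjointness from $\i_h$, and you treat $X$ and its quotients uniformly via the strengthened form of property~(1) --- but the underlying argument is identical.
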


\begin{proof}
We first show that $s$ is bound. Let $h$ be a hyperplane meeting $s$. As $s$ is monochromatic, every predecessor of $h$ meeting $s$ must have the same colour as $h$. But by construction of the colouring, $h$ cannot have the same colour as all of its predecessors, hence each $h$ meeting $s$ has a predecessor not meeting $s$ as required.

To see that $s$ is totally bound, consider the quotient $s'$ of $s$ in $X_{(n_D,n_{D-1},\dots,n_d)}$ for some $d, n_D,n_{D-1},\dots,n_d$. Suppose $h$ meets $s'$, that is $h$ meets $s$ and $h\in H_{(n_D,n_{D-1},\dots,n_d)}$. If $h$ has a predecessor $k$ meeting $s'$, then the maximal rank-vector of a predecessor of $h$ must be between $\r(h)$ and $\r(k)$, hence it is of the form $(n_D,n_{D-1},\dots,n_d,?,\dots,?)$. In particular the $\r$-maximal predecessors of $h$ all lie in $H_{(n_D,n_{D-1},\dots,n_d)}$. By construction of the colouring the $\r$-maximal predecessors of $h$ cannot all have the same colour as $h$, thus as $s$ is monochromatic there is an $\r$-maximal predecessor of $h$ not meeting $s$ and hence not meeting $s'$. We conclude that for each $h\in H_{(n_D,n_{D-1},\dots,n_d)}$ meeting $s'$ there is a predecessor of $h$ in $H_{(n_D,n_{D-1},\dots,n_d)}$ not meeting $s'$ as required.
\end{proof}

In Proposition \ref{q-straight-path} we established bounds on the number of different $d$-ranks that can occur on a bound geodesic. We will now prove a version of this for rank-vectors.

\begin{prop}
\label{q-straight-non-flat}
Let $s$ be a totally bound inward geodesic in $X$ and let $C$ be a chain in $s$. Then the rank-vector function takes at most $3^{f-1}$ different values on $C$, where $f$ is the flatness of $X$.
\end{prop}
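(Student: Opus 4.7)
I will prove the statement by induction on the flatness $f$ of $X$. The key reduction is that since $X$ is $(f+1)$-flat, every hyperplane has $d$-rank $0$ for all $d > f$, so the rank-vector $(n_D, \ldots, n_2)$ varies only in its last $f-1$ coordinates $(n_f, n_{f-1}, \ldots, n_2)$. In the base case $f = 1$, $X$ is $2$-flat, hence a tree, and every rank-vector is the zero vector, giving a single value, which matches $3^{f-1} = 1$.

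For the inductive step, assume the statement is known for all cube complexes of smaller flatness and suppose $X$ has flatness $f \geq 2$. Applying Proposition \ref{q-straight-path} with $d = f$ to the bound geodesic $s$, the $f$-rank takes at most three distinct values on the chain $C$. Fix one such value $n_f$ and let $C_{n_f} \subseteq C$ be the sub-chain of hyperplanes of $f$-rank equal to $n_f$. Passing to the quotient $X' = X_{(0,\ldots,0,n_f)}$, the hyperplanes of $C_{n_f}$ still form a chain (the order $<$ descends to the quotient, because it is determined by the nesting of halfspaces). By construction $X'$ is $f$-flat, so its flatness is at most $f-1$, and the remaining coordinates of the rank-vector of the hyperplanes in $C_{n_f}$ are precisely the rank-vector computed inside $X'$. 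Invoking the inductive hypothesis on $X'$ yields at most $3^{(f-1)-1} = 3^{f-2}$ values for $(n_{f-1}, \ldots, n_2)$ on $C_{n_f}$. Summing over the at most three values of $n_f$ gives $3 \cdot 3^{f-2} = 3^{f-1}$ as required.

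The main obstacle is to justify that the induction may be applied to $X'$, i.e.\ that the projection $s'$ of $s$ to $X'$ is totally bound in $X'$. This amounts to identifying the iterated quotients $(X')_{(n_{f-1}, \ldots, n_d)}$ with $X_{(0, \ldots, 0, n_f, n_{f-1}, \ldots, n_d)}$, so that boundedness of the image of $s'$ at each stage in $X'$ coincides with boundedness of the image of $s$ at the corresponding stage in $X$, which holds by the total-boundedness of $s$. This is essentially bookkeeping on the definition of the filtration, but it is precisely the step that uses total-boundedness rather than merely boundedness, and it is what makes the recursive application of Proposition \ref{q-straight-path} legitimate.
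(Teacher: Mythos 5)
Your proof is essentially the same as the paper's: induction on flatness $f$, base case handled because $2$-flatness forces all rank-vectors to vanish, and inductive step applying Proposition~\ref{q-straight-path} with $d=f$ to split $C$ into at most three sub-chains by $f$-rank, passing to the $f$-flat quotients $X_{(n_f)}$ and invoking the inductive hypothesis there. The observation you flag as the ``main obstacle''---that total boundedness of $s$ is exactly what makes $s_{(n_f)}$ totally bound in $X_{(n_f)}$, via identifying $(X_{(n_f)})_{(n_{f-1},\dots,n_d)}$ with $X_{(n_f,n_{f-1},\dots,n_d)}$---is used (more tersely) in the paper as well, so you have located the correct crux. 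One small slip: $2$-flat does not mean $X$ is a tree (e.g.\ $\R^2$ with its standard cubing is $2$-flat but not a tree); what you actually need, and correctly state, is only that $2$-flatness forces every $d$-rank, hence every rank-vector, to be zero.
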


\begin{proof}
We prove this by induction on the flatness of $X$. If $X$ has flatness 1, i.e.\ it is 2-flat then every rank-vector is zero and there is nothing to prove.

For the induction step, if $X$ has flatness $f$, then the first non-zero entry, $n_f$, of the rank-vector is precisely the $f$-rank. Since $X$ is $(f+1)$-flat, $n_f$ takes at most 3 values by Proposition \ref{q-straight-path}. For each of these values of $n_f$, consider the quotient $s_{(n_f)}$ of $s$ in $X_{(n_f)}$, and let $C_{(n_f)}$ be the intersection of $C$ with $H_{(n_f)}$. Then $s_{(n_f)}$ is a totally bound inward geodesic in $X_{(n_f)}$ and $C_{(n_f)}$ is a chain in $s_{(n_f)}$. The quotient $X_{(n_f)}$ is $f$-flat, so has flatness at most $f-1$. Hence by induction the rank-vector takes at most $3^{f-2}$ values on each $C_{(n_f)}$, for each of the 3 values of $n_f$. Hence it takes at most $3^{f-1}$ values on $C$.
\end{proof}

We are now ready to state and prove the colouring theorem. From Propositions \ref{mono-qs} and \ref{q-straight-non-flat} we know that monochromatic geodesics are totally bound and that chains in these geodesics therefore have a bounded number of rank-vectors. We combine this with an idea from \cite{BCGNW} to prove the following.

\begin{thm}
\label{colouring}
Let $X$ be a CAT(0) cube complex of dimension $D$ and flatness $f$. Then the colouring $c$ defined above is a $3^{f-1}D$-controlled colouring of $H$. That is, no monochromatic inward geodesic can have length greater than $3^{f-1}D$.
\end{thm}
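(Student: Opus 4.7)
The strategy is to bound the length of a monochromatic inward geodesic $s$ via a Mirsky-style chain/antichain decomposition of the $<$-poset formed by the hyperplanes that $s$ crosses. Since an inward edge-geodesic crosses each hyperplane at most once, its length equals the size of this poset, so it suffices to bound its antichains and its chains separately.

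For antichains I would first observe that any two hyperplanes met by $s$ share a common outward halfspace — namely the one containing the initial vertex of $s$, because inward crossings go from $h^+$ to $h^-$ — and therefore cannot be opposite. Hence an antichain in the $<$-order on the hyperplanes crossed by $s$ consists of pairwise intersecting hyperplanes, and so has cardinality at most $D$.

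For chains, I would take a maximal chain $h_0>h_1>\dots>h_p$ of hyperplanes crossing $s$. Maximality forces each $h_{i+1}$ to be a predecessor of $h_i$: any $k$ with $h_{i+1}<k<h_i$ would itself cross $s$, since tracing $s$ inward one must leave $k^+$ somewhere between the crossings of $h_i$ and $h_{i+1}$, contradicting maximality. Lemma \ref{rv-monot} then gives $\r(h_{i+1})\le\r(h_i)$; if equality held for some $i$, Lemma \ref{unique-max} together with monotonicity would make $h_{i+1}$ the unique $\r$-maximal predecessor of $h_i$, whereupon the defining rule for $c$ would force $c(h_{i+1})\neq c(h_i)$, contradicting the monochromaticity of $s$. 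Thus the rank-vectors are strictly decreasing along the chain. By Proposition \ref{mono-qs}, $s$ is totally bound, and so Proposition \ref{q-straight-non-flat} yields at most $3^{f-1}$ distinct rank-vectors on the chain, giving length at most $3^{f-1}$.

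To finish, I would apply Mirsky's theorem to any finite sub-poset of hyperplanes met by $s$ (an initial segment of $s$ of any prescribed length), decomposing it into at most $3^{f-1}$ antichains of size at most $D$, which yields the bound $3^{f-1}D$ on the length of $s$. I expect the main technical hurdle to be the chain step: the reduction to consecutive predecessors and the strict decrease of rank-vectors are exactly where the particular design of the colouring (clauses (1) and (2), enforced through $\r$-maximal predecessors) is essential, and these must be combined carefully with total boundedness in order to invoke Proposition \ref{q-straight-non-flat}.
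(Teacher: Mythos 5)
Your proof is correct, and the chain step is essentially identical to the paper's: extend to a maximal chain, note consecutive elements are predecessors (your argument for why any intermediate $k$ would also cross $s$ is exactly the justification the paper leaves implicit under ``by maximality''), deduce strict descent of rank-vectors from monochromaticity together with Lemma \ref{unique-max}, and invoke Propositions \ref{mono-qs} and \ref{q-straight-non-flat} to cap the chain length at $3^{f-1}$. Where you genuinely diverge is in the combinatorial wrap-up. The paper cites \cite{BCGNW} for the fact that an interval embeds isometrically in Euclidean $D$-space, hence the hyperplanes meeting $s$ can be partitioned into $D$ chains (the Dilworth direction), and then bounds each chain. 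You instead bound antichains directly by the elementary observation that every hyperplane crossed by an inward geodesic has the initial vertex in its outward halfspace, so no two are opposite and hence an antichain is a pairwise-crossing family of size at most $D$; combined with the chain bound, Mirsky's theorem gives the product bound. The two routes are dual and yield the same estimate, but yours is more self-contained: it replaces the appeal to the interval-embedding theorem with a two-line antichain argument, and Mirsky's theorem (partition by height) is cheaper than the Dilworth-style chain decomposition. Your caveat about passing to finite initial segments is sound and handles the applicability of Mirsky without fuss.
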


\begin{proof}
Let $s$ be a monochromatic inward geodesic. Let $x,y$ be the initial and final vertices of $s$. The set of hyperplanes meeting $s$ is precisely the set of hyperplanes crossing the interval $[x,y]$. In \cite{BCGNW} it is shown that an interval in a CAT(0) cube complex can be isometrically embedded as a subcomplex of the Euclidean space of the same dimension. In particular the set of hyperplanes crossing an interval of dimension at most $D$ can be partitioned into $D$ chains. Thus we have chains $C_1,\dots C_D$ partitioning the hyperplanes that meet $s$.

As $s$ is monochromatic, by Proposition \ref{mono-qs} it is totally bound. Hence by Proposition \ref{q-straight-non-flat} for any chain $C$ in $s$ the rank-vector function takes at most $3^{f-1}$ different values on $C$. For each $i$ we apply this to a maximal chain $C_i'$ in $s$, containing $C_i$. Enumerate $C_i'$ as $h_0>h_1>\dots>h_p$. For each $j$, $h_{j+1}$ is a predecessor of $h_j$, by maximality, so as $c(h_i)=c(h_{i+1})$ we know that $\r(h_{j+1})<\r(h_j)$. Thus the rank-vector function takes $|C_i'|$ different values on $C_i'$, so $|C_i'|\leq 3^{f-1}$.

We conclude that there are $D$ chains $C_1,\dots, C_D$ partitioning the hyperplanes that meet $s$, and as $C_i\subseteq C_i'$, each chain $C_i$ has cardinality at most $3^{f-1}$. Hence there are at most $3^{f-1}D$ hyperplanes meeting $s$, that is $s$ has length at most $3^{f-1}D$. This completes the proof.
\end{proof}

\section{The projection theorem}
\label{sec:projection}

In this section we will consider coordinates on a CAT(0) cube complex $X$. These will give an embedding of $X$ into $l^1(H)$ where $H$ is the set of hyperplanes of $X$ (cf.\ \cite{NRoll}). The main result of this section is a projection theorem which allows us to retract from a cube in $l^1(H)$ onto the embedded copy of $X$.

Let $\C=\C_H$ denote the set of all finitely supported elements $\xi=(\xi_h)$ of $l^1(H)$ such that $\xi_h\in[0,1]$ for all $h\in H$. Viewing $\C$ as the union of the finite dimensional cubes corresponding to finite subsets of $X$, we see that $\C$ is a CAT(0) cube complex and indeed there is a canonical bijection between the hyperplanes of $\C$ and the hyperplanes $H$ of $X$. The restriction of the metric on $l^1(H)$ to $\C$ gives the $l^1$ path metric on $\C$. The vertex set of $\C$ is precisely the finitely supported $\{0,1\}$-valued functions on $H$.

The CAT(0) cube complex $X$ can be embedded into $\C$ as follows. For a vertex $x$ in $X$ we map $x$ to the vertex $\xi$ of $\C$ defined by $\xi_h=1$ for all $h$ separating $x_0,x$ and $\xi_h=0$ otherwise. We extend the map affinely to give a map from $X$ to $\C$. The bijection between the hyperplanes of $\C$ and the hyperplanes $H$ of $X$ makes this map isometric on the vertex set (with edge-path metric) since the distance between two vertices of $X$ is the number of hyperplanes separating them. We will see that it is isometric on the whole of $X$ (equipped with the $l^1$ path metric) as a corollary of the projection theorem.

It is often convenient to identify $X$ with its image in $\C$. Indeed one can think of the embedding as providing coordinates $(\xi_h)$ associated to a point $x$ in $X$.

\begin{lemma}
\label{image-of-X}
Let $F$ be a finite set of hyperplanes in $H$. A vertex $\xi=\chi_F$ of $\C$ lies in the image of $X$ if and only if
\begin{enumerate}
\item when $k\in F$ and $h<k$ then $h\in F$, and
\item $F$ contains no opposite pair of hyperplanes.
\end{enumerate}
\end{lemma}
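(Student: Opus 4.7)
The plan is to use the characterization of vertices of $X$ recalled in Section 1: a vertex corresponds to a choice of halfspace for each hyperplane such that the chosen halfspaces pairwise intersect and only finitely many differ from $\h_{x_0}$. Under the embedding, a vertex $x \in X$ maps to $\chi_F$ where $F = \{h \in H : x \in h^+\}$ is precisely the set of hyperplanes separating $x_0$ from $x$; equivalently, $F$ records the hyperplanes for which the orientation associated to $x$ differs from that of $x_0$.

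For the forward direction, suppose $\chi_F$ is the image of a vertex $x$, so $x \in h^+$ for all $h \in F$. If $k \in F$ and $h < k$, then by definition $k^+ \subsetneq h^+$, hence $x \in h^+$, so $h \in F$, verifying (1). If $h, k \in F$ were opposite then $h^+ \cap k^+ = \emptyset$, contradicting $x \in h^+ \cap k^+$, verifying (2).

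For the converse, given a finite set $F$ satisfying (1) and (2), define an orientation of $H$ by choosing $h^+$ when $h \in F$ and $h^-$ otherwise. Since $F$ is finite, only finitely many of these orientations differ from $\h_{x_0}$, so it suffices to check pairwise intersection of the chosen halfspaces. Fix two hyperplanes $h \neq k$; exactly one of the four alternatives from Section 1 holds (they intersect, $h<k$, $k<h$, or $h$ opposite $k$), and these correspond respectively to $h^- \cap k^+$, $h^+ \cap k^-$, and $h^+ \cap k^+$ being empty (in the `intersect' case no quadrant is empty). A short case analysis on whether each of $h, k$ lies in $F$ handles all possibilities: when both are in $F$ we need $h^+ \cap k^+ \neq \emptyset$, ruled out by (2); when exactly one is in $F$, say $h \in F$ and $k \notin F$, the relevant quadrant $h^+ \cap k^-$ is empty only when $k < h$, which by (1) would force $k \in F$, a contradiction; symmetrically for $h \notin F$, $k \in F$; and when neither is in $F$ the quadrant $h^- \cap k^-$ contains $x_0$ and so is non-empty.

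This establishes pairwise intersection of the chosen halfspaces, hence by the characterization recalled in Section 1 they determine a vertex of $X$, whose image in $\C$ is $\chi_F$. There is no substantive obstacle; the only point requiring care is the correct identification of which quadrant is empty in each of the three non-intersecting configurations, which was already recorded in the discussion preceding the lemma.
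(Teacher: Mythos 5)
Your proof is correct and takes essentially the same approach as the paper: both directions are established by translating membership of $\chi_F$ in the image of $X$ into the pairwise-intersection criterion for orientations recalled in Section 1, with the same case analysis on which of $h,k$ lies in $F$. (Incidentally, your treatment of the case $h,k\notin F$ correctly invokes $h^-\cap k^-\ni x_0$, whereas the paper's text has a small typo writing $h^+\cap k^+$ there.)
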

\begin{proof}
If $\xi=\chi_F$ lies in the image of $X$ then $F$ is the set of hyperplanes separating $x_0,x$ for some $x\in X$. Thus $k\in F$ and $h<k$ implies that $h^+\supset k^+\ni x$, so $h\in F$. Similarly $h,k\in F$ implies $h^+\cap k^+$ is non-empty (it contains $x$) so $h,k$ are not opposite.

Conversely suppose that $F$ is a finite set of hyperplanes in $H$ satisfying (1),(2). For each $h$ in $H$ we choose the orientation $h^+$ if $h\in F$, and $h^-$ otherwise. Finitely many of these orientations differ from $\h_{x_0}=\{h^- : h\in H\}$. Moreover we will show that these halfspaces are pairwise intersecting. If $h,k\in F$ then $h^+\cap k^+$ is non-empty as $h,k$ are not opposite, while if $h,k\in H\setminus F$ then $h^+\cap k^+$ is non-empty as it contains $x_0$. On the other hand if one of $h,k$ lies in $F$ and the other in $H\setminus F$, say $k\in F$ and $h\in H\setminus F$ then by (1) we cannot have $h<k$. That is $h^+$ does not contain $k^+$ so that $h^-\cap k^+$ is non-empty. Hence the orientations determine a vertex of $X$, and $\xi=\chi_F$ is the image of this vertex.
\end{proof}


\begin{defn}
Let $h,k$ be opposite hyperplanes. A point $\xi$ in $\C$ is \emph{$(h,k)$-intervalic} if one or both of $\xi_h,\xi_k$ is zero. We say that $\xi$ is \emph{intervalic} if it is $(h,k)$-intervalic for all opposite pairs $h,k$.
\end{defn}

\begin{defn}
Let $h,k$ be hyperplanes with $h<k$. A point $\xi$ in $\C$ is \emph{$(h,k)$-actual} if $x_h=1$ or $x_k=0$. Otherwise we say that $\xi$ is \emph{$(h,k)$-virtual}. We say that $\xi$ is \emph{actual} if it is $(h,k)$-actual for all pairs $h<k$.
\end{defn}

In terms of these definitions, Lemma \ref{image-of-X} amounts to the statement that if $\xi\in \C$ is $\{0,1\}$-valued then $\xi$ is in the image of $X$ if and only if $\xi$ is intervalic and actual. We now extend this to general points of $\C$.

\begin{lemma}
\label{image-of-X2}
A point $\xi$ in $\C$ lies in the image of $X$ if and only if $\xi$ is intervalic and actual.
\end{lemma}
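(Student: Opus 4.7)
The plan is to treat the two directions of the if-and-only-if separately. The forward direction is a geometric check based on the unique minimal cube $C$ containing $x$; the backward direction is harder and requires reconstructing such a cube from the combinatorial data of $\xi$, ultimately reducing to Lemma \ref{image-of-X} on the vertices of $C$.

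For the forward direction, I would take $x \in X$ to lie in a minimal cube $C$. The embedding then assigns $\xi_h \in (0,1)$ for each hyperplane $h$ crossing $C$, and $\xi_h \in \{0,1\}$ otherwise, with $\xi_h = 1$ precisely when $C \subset h^+$. Any two hyperplanes crossing a common cube must pairwise intersect, so neither an opposite pair nor a $<$-comparable pair can both cross $C$. The remaining configurations reduce to a short halfspace-inclusion argument: for instance, if $h<k$, only $k$ crosses $C$, and $\xi_k > 0$, then $C$ meets $k^+ \subset h^+$, and since $h$ does not cross $C$ the whole cube lies in $h^+$, forcing $\xi_h=1$; the other cases are analogous.

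For the backward direction, given $\xi$ intervalic and actual, set $F=\{h : \xi_h=1\}$ and $E=\{h : 0<\xi_h<1\}$; both are finite. The plan is to exhibit a cube $C$ of $X$ of dimension $|E|$ containing a vertex $v_\emptyset$ with coordinate vector $\chi_F$ and crossed by exactly the hyperplanes in $E$; the image of $C$ in $\C$ is then the affine sub-cube $\{\eta \in \C : \eta_h = \xi_h \text{ for all } h \notin E\}$, which manifestly contains $\xi$. For each $S \subseteq E$, I would first verify that $F \cup S$ satisfies conditions (1) and (2) of Lemma \ref{image-of-X}: if $k \in F \cup S$ and $h<k$ then $\xi_k>0$, so actuality forces $\xi_h=1$ and hence $h \in F \subseteq F \cup S$; and an opposite pair inside $F \cup S$ would have both coordinates positive, violating intervalic. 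This produces a genuine vertex $v_S \in X$ for every subset $S$ of $E$. Similarly, intervalic and actuality together force any two hyperplanes in $E$ to intersect (they are neither opposite nor $<$-comparable), and each $h \in E$ is adjacent to $v_\emptyset$ since $v_{\{h\}}$ differs from $v_\emptyset$ only in the $h$-coordinate.

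The main obstacle is assembling the $2^{|E|}$ vertices $v_S$ into a genuine cube of $X$ rather than merely a combinatorial candidate. For this I would invoke the correspondence recalled in Section \ref{prelim} between maximal sets of pairwise intersecting hyperplanes and cubes: since $E$ is pairwise intersecting and all of its elements are adjacent to $v_\emptyset$, the flag-link condition at $v_\emptyset$ supplies a cube $C$ of dimension $|E|$ containing $v_\emptyset$ and crossed by exactly the hyperplanes of $E$, whose vertex set is necessarily $\{v_S : S \subseteq E\}$. Once $C$ is in hand, the coordinate description of the embedding shows that its image in $\C$ is the desired sub-cube, placing $\xi$ in the image of $X$ and completing the proof.
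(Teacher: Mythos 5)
Your proof is correct and takes essentially the same route as the paper: identify the subcube of $\C$ spanned by rounding the coordinates in $(0,1)$ to $\{0,1\}$, check that its vertices satisfy the hypotheses of Lemma~\ref{image-of-X}, and then observe that these vertices span a cube in $X$ whose image contains $\xi$. The only cosmetic difference is in the forward direction, where you argue directly from the minimal cube of $X$ containing $x$, while the paper rounds $\xi$ to vertices and invokes Lemma~\ref{image-of-X} for each; your final ``assembly'' step, invoking the flag-link condition at $v_\emptyset$, usefully spells out what the paper leaves as an appeal to the hyperplane correspondence.
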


\begin{proof}
First suppose that $\xi$ in lies in the image of $X$. Let $F=\{h\in H : 0<\xi_h<1\}$. Then $\xi$ lies in an $|F|$ dimensional subcube of $\C$: the $2^{|F|}$ vertices of this cube are obtained by letting $\eta_h$ be either 0 or 1 for each $h$ in $F$, and setting $\eta_h=\xi_h$ for $h\notin F$. Since $\xi$ lies in the image of $X$, these vertices must also lie in the image of $X$.

If $\xi_h,\xi_k$ are non-zero then there is a vertex $\eta$ such that $\eta_h=\eta_k=1$. As this vertex is in the image of $X$ we deduce that $h,k$ cannot be opposite, hence $\xi$ is intervalic. Similarly if $\xi_h<1$ and $\xi_k>0$ then there is a vertex $\eta$ such that $\eta_h=0$ and $\eta_k=1$. Thus we cannot have $h<k$, so $\xi$ is actual.

Conversely, suppose that $\xi$ is intervalic and actual. Let $F=\{h\in H : 0<\xi_h<1\}$ and consider the vertices obtained by letting $\eta_h$ be 0 or 1 for each $h$ in $F$, and setting $\eta_h=\xi_h$ for $h\notin F$. Once again these span a subcube of $\C$ containing $\xi$. These vertices are intervalic and actual: if $\eta_h,\eta_k$ are non-zero then the same is true of $\xi_h,\xi_k$ so that $h,k$ cannot be opposite, and if $\eta_h=0$ and $\eta_k=1$ then $\xi_h<1$ and $\xi_k>0$ so we cannot have $h<k$. We deduce that the vertices of the subcube are all in the image of $X$. Moreover the correspondence between the hyperplanes of $X$ and those of $\C$ guarantees that the vertices in $X$ whose images are the vertices of the subcube will themselves span a cube in $X$. Hence the subcube of $\C$, and in particular the point $\xi$ are in the image of $X$.
\end{proof}

To prove the projection theorem we will need to consider infinite compositions of functions. Let $S$ be a set, and $(P_\lambda)_{\lambda\in \Lambda}$ a family of maps from $S$ to itself. For $s\in S$ we say that the \emph{$\Lambda$-support} of $s$, denoted $\LSup(s)$, is the set of $\lambda\in \Lambda$ such that $P_\lambda$ does not fix $s$. If $\Lambda$ is equipped with a total order $\prec$, and $F$ is a finite subset of $\Lambda$ then define $P_F=P_{\lambda_k}\circ\dots \circ P_{\lambda_1}$ where $F$ is enumerated as $\lambda_1,\dots,\lambda_k$, ordered by $\prec$.

\begin{lemma}
\label{infinite-comp}
 Let $S$ be a set, and $(P_\lambda)_{\lambda\in \Lambda}$ a family of maps from $S$ to itself. Suppose that
\begin{enumerate}
\item for all $s\in S$ the $\Lambda$-support of $s$ is finite,

\item for all $s$ in $S$ and for $\lambda$ the $\prec$-least element of $\LSup(s)$, we have
$$\LSup(P_\lambda(s))\subseteq \LSup(s)\setminus\{\lambda\}.$$
\end{enumerate}
Then for $F$ a finite subset of $\Lambda$ containing $\LSup(s)$, the $\Lambda$-support of $P_F(s)$ is empty. Moreover $P_F(s)$ does not depend on the choice of $F$ containing $\LSup(s)$, and if $\LSup(s)$ is empty then $P_F(s)=s$ for all $F$.

If $S$ is a metric space, and each $P_\lambda$ is contractive then the map $P:S\to S$ defined by $s\mapsto P_F(s)$ for any $F$ containing $\LSup(s)$, is also contractive.
\end{lemma}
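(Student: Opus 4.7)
The plan is to induct on $|\LSup(s)|$, which is finite by hypothesis (1). The base case $|\LSup(s)|=0$ is immediate: every $P_\lambda$ fixes $s$, so $P_F(s)=s$ for any finite $F$, which has empty $\Lambda$-support. This simultaneously handles the last clause of the first sentence (the $\LSup(s)=\emptyset$ case).

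For the inductive step, fix a finite $F\supseteq\LSup(s)$ and enumerate it in $\prec$-order as $\mu_1\prec\cdots\prec\mu_k$. Let $\lambda_0$ be the $\prec$-least element of $\LSup(s)$, and say $\lambda_0=\mu_i$. The key observation is that for $j<i$ we have $\mu_j\prec\lambda_0$, so $\mu_j\notin\LSup(s)$, and hence $P_{\mu_j}$ fixes $s$. Therefore $P_{\mu_{i-1}}\circ\cdots\circ P_{\mu_1}(s)=s$, and setting $s_1:=P_{\lambda_0}(s)$ we obtain $P_F(s)=P_{F_1}(s_1)$ with $F_1:=\{\mu_{i+1},\ldots,\mu_k\}$. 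Now I would check that $F_1$ is admissible for $s_1$: by hypothesis (2), $\LSup(s_1)\subseteq\LSup(s)\setminus\{\lambda_0\}$, which is strictly smaller; and any element of $\LSup(s_1)$, being in $\LSup(s)$ but distinct from $\lambda_0$, is $\prec$-greater than $\lambda_0$, so it lies in $F_1$. Thus $F_1\supseteq\LSup(s_1)$ and the induction hypothesis applies to $s_1$.

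This single reduction delivers both remaining parts of the first conclusion. Emptiness of support: $\LSup(P_F(s))=\LSup(P_{F_1}(s_1))=\emptyset$ by induction. Independence of $F$: $s_1$ depends only on $s$, and any other finite $F'\supseteq\LSup(s)$ reduces in the same way to some $F_1'\supseteq\LSup(s_1)$; by the inductive independence statement, $P_{F_1}(s_1)=P_{F_1'}(s_1)$, whence $P_F(s)=P_{F'}(s)$. So the map $P\colon s\mapsto P_F(s)$ is well defined.

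For the contractive clause, given $s,t\in S$ I would choose a single finite $F\supseteq\LSup(s)\cup\LSup(t)$. By the independence property just established, $P(s)=P_F(s)$ and $P(t)=P_F(t)$. Since each $P_\lambda$ is contractive and a finite composition of contractive maps is contractive, $d(P(s),P(t))=d(P_F(s),P_F(t))\leq d(s,t)$. The only subtlety in the whole argument is the bookkeeping around elements of $F$ that $\prec$-precede $\lambda_0$ but do not lie in $\LSup(s)$; once one notices that such elements simply fix $s$ and can be ignored until $P_{\lambda_0}$ is applied, the induction proceeds cleanly, and the fact that $\LSup$ strictly decreases under this reduction is precisely hypothesis (2).
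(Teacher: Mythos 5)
Your proof is correct and is essentially the same argument as the paper's, differing only in bookkeeping: you induct on $|\LSup(s)|$ and peel off the $\prec$-least support element to reduce to a smaller instance, while the paper does a forward pass through the enumeration of $F$ maintaining the invariant $\LSup(P_i(s))\subseteq\{\lambda_{i+1},\dots,\lambda_k\}\cap\LSup(s)$. Your organization has the minor advantage that the independence of $P_F(s)$ from $F$ falls out directly from the recursion, whereas the paper handles it with a separate observation about deleting non-support indices from the composition.
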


The map $P$ can be regarded as the infinite composition of all $P_\lambda$ ordered by $\prec$.

\begin{proof}
Enumerate $F$ in $\prec$-order as $\lambda_1,\lambda_2,\dots,\lambda_k$. Let $P_0(s)=s$ and for $i=1,\dots,k$, let $P_i(s)=P_{\lambda_i}(P_{i-1}(s))$. We claim that for each $i$ the $\Lambda$-support of $P_i(s)$ is contained in $F_i=\{\lambda_{i+1},\dots,\lambda_k\}\cap\LSup(s)$, hence in particular when $i=k$ we find that the $\Lambda$-support is empty.

For $i=0$ this is true by hypothesis. Now supposing that $\LSup(P_{i-1}(s))$ is contained in $F_{i-1}=\{\lambda_i,\dots,\lambda_k\}\cap\LSup(s)$ then either $\lambda_i$ is the least element of $\LSup(P_{i-1}(s))$ or $\lambda_i$ is not in the $\Lambda$-support of $P_{i-1}(s)$. In the first case we have
$$\LSup(P_{\lambda_i}(P_{i-1}(s)))\subseteq \LSup(P_{i-1}(s))\setminus\{\lambda_i\}\subseteq \{\lambda_{i+1},\dots,\lambda_k\}\cap\LSup(s),$$
while in the second case $P_{\lambda_i}(P_{i-1}(s))=P_{i-1}(s)$, so the $\Lambda$-support is unchanged. By the induction hypothesis $\LSup(P_{i-1}(s))$ is contained in $F_{i-1}$, but in this second case $\lambda_i$ is not in the $\Lambda$-support, so we deduce that $\LSup(P_i(s))$ is contained in $F_i$ as required.

We note that if $\lambda_i$ is not in the $\Lambda$-support of $s$, then it is not in the $\Lambda$-support of $P_{i-1}(s)$, and we have $P_i(s)=P_{i-1}(s)$. Hence in the composition we have 
$$P_F(s)=P_{\lambda_k}\circ \dots \circ P_{\lambda_{i+1}}\circ P_{\lambda_i}(P_{i-1}(s))=P_{\lambda_k}\circ \dots \circ P_{\lambda_{i+1}}(P_{i-1}(s)).$$
Hence $P_F(s)=P_{F\setminus\{\lambda_i\}}(s)$. Removing all such $\lambda_i$ we deduce that $P_F(s)=P_{\LSup(s)}(s)$, so is independent of the choice of $F$. In the special case that $\LSup(s)$ is empty, each $P_{\lambda_i}$ fixes $s$ and we have $P_F(s)=s$.

Now suppose that $S$ is a metric space and each $P_{\lambda}$ is contractive. Given $s,t\in S$, take $F$ to be a finite set containing both $\LSup(s)$ and $\LSup(t)$. Then $P(s)=P_F(s)$ and $P(t)=P_F(t)$, so as $P_F$ is contractive we deduce that $d(P(s),P(t))\leq d(s,t)$. Since this is true for any pair $s,t$ we conclude that $P$ is contractive.
\end{proof}

Before we prove the projection theorem, we examine a couple of simple examples. First consider $X_1$ a tree of three edges emanating from the basepoint $x_0$. We identify $H_1$ with $\{1,2,3\}$ and hence $l^1(H_1)=\R^3$ with the $l^1$-metric. The space $X_1$ is embedded in $\R^3$ as intervals along the three axes. In this example the image of $X_1$ is precisely the set of intervalic points in the cube (all intervalic points are actual). We project from the cube in $\R^3$ onto $X_1$ (i.e. the set of intervalic points) by first projecting onto two faces of the cube and then for each face projecting onto two edges of the face, as in Fig.\ \ref{figproj} (a). The choice of which direction to project in first affects the map, but each choice will produce a retraction onto $X_1$.

For the second example, consider $X_2$ a line of three edges starting from the basepoint $x_0$. Again we have $l^1(H_2)=\R^3$ with the $l^1$-metric, and the space $X_2$ is embedded in the cube as illustrated in Fig.\ \ref{figproj} (b)(i). In this case the cube is intervalic, and the image of $X_2$ is the actual points. For this example, the order of the projections is important. We must first project in the $(1,3)$-plane, and then do the other two projections as in Fig.\ \ref{figproj} (b)(i). Doing the projections in a different order we do not obtain the correct range. For example, doing $(1,2)$ then $(2,3)$, projects onto the union of a square and an edge on which the $(1,3)$ projection is the identity, see Fig.\ \ref{figproj} (b)(ii).

\begin{figure}
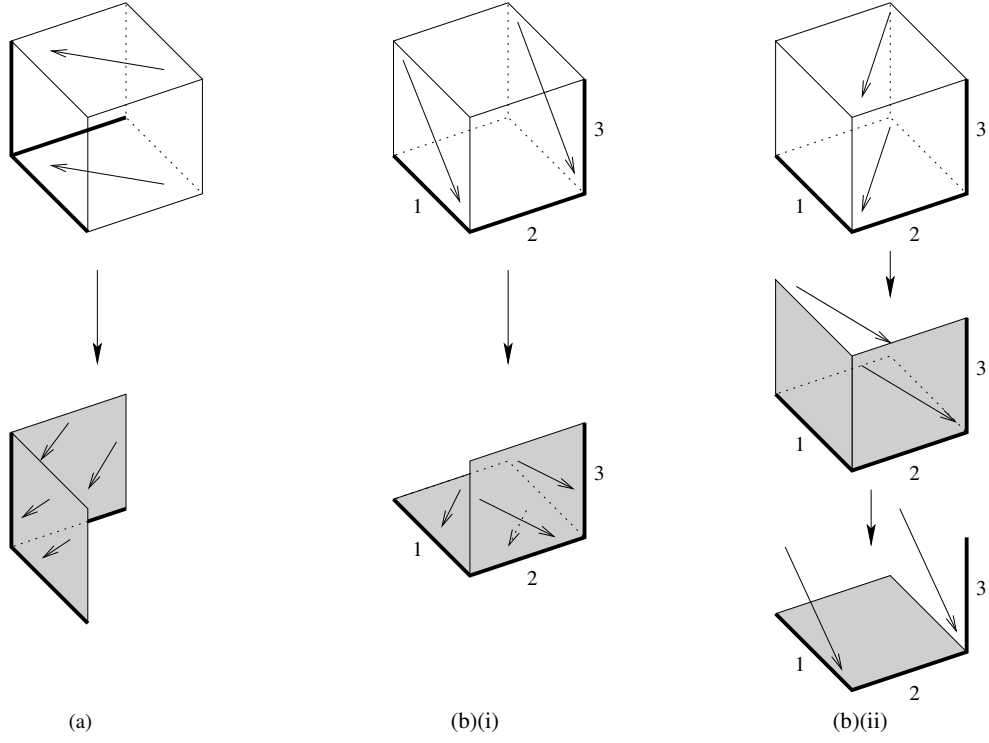

\incgraphics{proj}
\caption{Two examples of the Projection Theorem.}
\label{figproj}
\end{figure}

With these two examples in mind, we will now prove the theorem.

\begin{thm}[Projection Theorem]
\label{projection-theorem}
Let $X$ be a CAT(0) cube complex, let $H$ be the set of hyperplanes of $X$ and let $\C$ be the cube of finitely supported elements $\xi\in l^1(H)$ with $\xi_h\in[0,1]$ for all $h\in H$. Embed $X$ into $\C$ as above and let $A$ denote the image of $X$ in $\C$. Then there is a map $P:\C\to A$ such that
\begin{enumerate}
\item $P$ is contractive,
\item the restriction of $P$ to $A$ is the identity,
\item if $\xi\in \C$ is intervalic then $\|P(\xi)\|_1=\|\xi\|_1$.
\end{enumerate}
\end{thm}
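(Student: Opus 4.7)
The strategy is to realize $P$ as an infinite composition, via Lemma~\ref{infinite-comp}, of elementary two-dimensional folding maps, each of which enforces one of the two defining conditions of $A$ on a single pair of coordinates. By Lemma~\ref{image-of-X2}, $A$ is exactly the set of points of $\C$ that are both intervalic and actual, so the goal is to retract $\C$ onto this set in a contractive manner that preserves the $l^1$-norm on intervalic inputs.

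First, the building blocks. For each opposite pair $\{h,k\}$ I define $P^{\mathrm{op}}_{\{h,k\}}:\C\to\C$ to act on $(\xi_h,\xi_k)$ by the $l^1$-fold along the diagonal, sending $(\xi_h,\xi_k)\mapsto(\xi_h-\xi_k,0)$ when $\xi_h\geq\xi_k$ and $(0,\xi_k-\xi_h)$ otherwise, leaving all other coordinates fixed. For each ordered pair $h<k$ I define $P^{<}_{(h,k)}$ by the complementary fold, sending $(\xi_h,\xi_k)\mapsto(1,\xi_h+\xi_k-1)$ when $\xi_h+\xi_k\geq 1$ and $(\xi_h+\xi_k,0)$ otherwise. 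A direct case analysis shows that each of these maps is $l^1$-contractive; the $<$-fold additionally preserves the $l^1$-norm, while the opposite-fold fixes points already intervalic for that pair.

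The technical heart of the proof is to choose a total order $\prec$ on the index set $\Lambda$ of pairs so that Lemma~\ref{infinite-comp} applies. Finiteness of $\LSup(\xi)$ is immediate, because $\xi$ has finite support and for each $k$ in that support the set $\{h:h<k\}$ is finite. The crux is condition~(2) of the lemma: applying $P_\lambda$ at the $\prec$-least element $\lambda$ of $\LSup(\xi)$ must not introduce any new element into $\LSup$. I would order $\Lambda$ so that a pair with a larger ``outer'' hyperplane (namely $k$ for an ordered pair $(h,k)$, and the deeper of $h,k$ for an opposite pair) is processed first, with inner hyperplane and pair type used as tie-breakers, mirroring the $(1,3)$-first choice demonstrated in Example~2. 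The key structural input from CAT(0) cube complexes is the fact that $h$ opposite $k$ together with $h<j$ forces $j$ opposite $k$ (noted just after the definition of ``opposite''). Using this, if the $\prec$-least element of $\LSup$ is an ordered pair $(h,k)$ with $\xi_h=0$, one checks that every $h'<h$ must satisfy $\xi_{h'}=1$ and every $m$ opposite $h$ must satisfy $\xi_m=0$, because otherwise $(h',k)$ or $(m,k)$ would be a $\prec$-smaller element of $\LSup$; hence increasing $\xi_h$ creates no new violations. A parallel argument handles opposite-type pairs. I expect the precise ordering and its verification to be the main obstacle, requiring a delicate tie-breaking scheme.

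Granted such an order, Lemma~\ref{infinite-comp} yields a contractive map $P:\C\to\C$. Its image consists of points with empty $\Lambda$-support, i.e.\ intervalic and actual points, which by Lemma~\ref{image-of-X2} is precisely $A$; this gives~(1). Points of $A$ have empty $\Lambda$-support and are therefore fixed by $P$, giving~(2). For~(3), if $\xi$ is intervalic then every opposite-fold acts as the identity on $\xi$, and the same opposite/ordered implication used above shows that each $<$-fold preserves the intervalic property: if $P^{<}_{(h,k)}$ raises $\xi_h$ from $0$ to a positive value, then $\xi_k>0$ combined with intervalicness forces $\xi_m=0$ for every $m$ opposite $h$ (since $m$ is also opposite $k$). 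Consequently only the norm-preserving $<$-folds act nontrivially on an intervalic input, and the $l^1$-norm is preserved throughout the composition.
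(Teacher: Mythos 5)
Your building blocks (the two elementary $l^1$-folds, each contractive, and an appeal to Lemma~\ref{infinite-comp}) are exactly right, but the structure of your argument diverges from the paper in a way that leaves a genuine gap. The paper does not attempt a single composition over the combined index set; it factors $P = P_A\circ P_I$. For $P_I$ (the opposite folds) it observes that these maps only \emph{decrease} coordinates, so they can never create a new non-intervalic pair; condition (2) of Lemma~\ref{infinite-comp} therefore holds for \emph{every} element of the $\Hop$-support, not just the $\prec$-least one, and any ordering works. The $<$-folds are then sequenced separately on $I$, by a clean numerical invariant (the length of the longest chain $h=h_0<\dots<h_l=k$), and a short argument verifies condition (2). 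This factorization is exactly what dissolves the difficulty you flag.

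Your single combined ordering runs into two problems that you acknowledge but do not resolve, and which I believe are fatal as stated. First, ``deeper outer hyperplane first'' is not well-defined: $<$ is only a partial order on $H$, so ``the deeper of $h,k$'' requires a numerical depth function you have not introduced, and the tie-break between an opposite pair $\{m,k\}$ and an order pair $(h,k)$ with the same outer hyperplane is entirely unspecified. Second, and more seriously, your verification of condition (2) only addresses the $<$-fold case. You never check whether applying an opposite fold at the $\prec$-least element can create a new virtual $<$-pair, and it can: if $P^{\mathrm{op}}_{\{m,h\}}$ drops $\xi_h$ from $1$ to below $1$ while some $k>h$ has $\xi_k>0$, the pair $(h,k)$ goes from actual to virtual. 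One can try to argue that $\{m,k\}$ would then be a $\prec$-earlier violation (using the fact that $m$ opposite $h$ and $h<k$ force $m$ opposite $k$), but making this precise requires exactly the careful definitions you are deferring; saying the ordering is ``the main obstacle'' concedes the point rather than proving it. The remainder — finiteness of supports, the identification of the image with $A$ via Lemma~\ref{image-of-X2}, the fixed-point property on $A$, the norm-preservation of $<$-folds, and the observation that $<$-folds preserve intervalicness — is correct and, in the last case, makes explicit a step the paper leaves implicit.
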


\begin{proof}
We note that $A$ is precisely the set of intervalic and actual points in $\C$. Let $I$ denote the set of intervalic points of $\C$. We will construct the projection $P$ as a composition $P=P_A\circ P_I$, where both $P_A,P_I$ are contractive, $P_I$ is a retraction of $\C$ onto $I$, $P_A$ is a retraction of $I$ onto $A$, and $P_A$ preserves the $l^1$ norm.

Let $\Hop=\{\{h,k\}:h,k\in H,\, \text{$h$ opposite $k$}\}$. We define a map $\pop$ on $[0,1]^2$ by
$$\pop(\xi_1,\xi_2)=\begin{cases}
(\xi_1-\xi_2,0) &\text{if } \xi_1\geq \xi_2\\
(0,\xi_2-\xi_1) &\text{if } \xi_1\leq \xi_2
\end{cases}.$$
For $\{h,k\}$ in $\Hop$ we then define a map $P^{h,k}$ on $\C$. Let $P^{h,k}(\xi)=\xi'$ where $(\xi'_h,\xi'_k)=\pop(\xi_h,\xi_k)$ and $\xi'_j=\xi_j$ for $j\neq h,k$.

To show that each $P^{h,k}$ is contractive it suffices to show that $\pop$ is contractive. Take $(\xi_1,\xi_2),(\eta_1,\eta_2)$ in $[0,1]^2$. If $\xi_1\geq \xi_2$ and $\eta_1\geq \eta_2$ then
\begin{align*}
\normi{\pop(\xi_1,\xi_2)-\pop(\eta_1,\eta_2)}&=\normi{(\xi_1-\xi_2)-(\eta_1-\eta_2),0}\\
&\leq|\xi_1-\eta_1|+|\xi_2-\eta_2|=\normi{(\xi_1-\eta_1,\xi_2-\eta_2)}.
\end{align*}
Similarly if $\xi_1\leq \xi_2$ and $\eta_1\leq \eta_2$. Now suppose that $\xi_1\geq \xi_2$ and $\eta_1\leq \eta_2$. Then 
\begin{align*}
\normi{\pop(\xi_1,\xi_2)-\pop(\eta_1,\eta_2)}&=\normi{((\xi_1-\xi_2)-0,0-(\eta_2-\eta_1))}=\xi_1-\xi_2+\eta_2-\eta_1\\
&\leq |\xi_1-\eta_1|+|\xi_2-\eta_2|=\normi{(\xi_1-\eta_1,\xi_2-\eta_2)}.
\end{align*}
We conclude that $\pop$ is contractive, hence each map $P^{h,k}$ is contractive.

To construct $P_I$ we apply Lemma \ref{infinite-comp}, for the family $P^{h,k}, \{h,k\}\in \Hop$. We note that the $\Hop$-support of a point $\xi$ is the set of all non-intervalic pairs $\{h,k\}$ for $\xi$. This is finite, since if $\{h,k\}$ is in the $\Hop$-support of $\xi$ then $h$ and $k$ are in the usual support of $\xi$ as a function on $H$. Note that each $P^{h,k}$ reduces coordinates, so for any $h,k$, a point which is $(h',k')$-intervalic will remain so upon applying $P^{h,k}$. In other words applying $P^{h,k}$ does not increase $\Hop$-supports. The point $P^{h,k}(\xi)$ is $(h,k)$-intervalic by construction, hence we deduce that for any $\lambda=\{h,k\}$
$$\HopSup(P^\lambda(s))\subseteq \HopSup(s)\setminus\{\lambda\}.$$
Thus choosing any ordering on $\Hop$, we can apply the lemma to obtain a map $P_I:\C\to \C$. This is a retraction onto those $\xi$ such that the $\Hop$-support is empty, that is $P_I$ is a retraction onto $I$. As each $P^{h,k}$ is contractive we deduce that $P_I$ is also contractive. We have thus constructed a map $P_I$ with the required properties.

We now move on to the construction of $P_A$. Let $\Hlt=\{(h,k):h,k\in H,\, h<k\}$. We define a map $\plt$ on $[0,1]^2$ by
$$\plt(\xi_1,\xi_2)=\begin{cases}
(\xi_h+\xi_k,0) &\text{if } \xi_h+\xi_k\leq 1\\
(1,\xi_h+\xi_k-1) &\text{if } \xi_h+\xi_k\geq 1
\end{cases}.$$
For $(h,k)$ in $\Hlt$, we define a map $P_h^k$ on $I$ by $P_h^k(\xi)=\xi'$ where $(\xi'_h,\xi'_k)=\plt(\xi_h,\xi_k)$ and $\xi'_j=\xi_j$ for $j\neq h,k$.

Take $(\xi_1,\xi_2),(\eta_1,\eta_2)$ in $[0,1]^2$. If $\xi_1+\xi_2\leq 1$ and $\eta_1+\eta_2\leq 1$ then
$$\normi{\pop(\xi_1,\xi_2)-\pop(\eta_1,\eta_2)}=\normi{(\xi_h+\xi_k)-(\eta_h+\eta_k)|,0}\leq \normi{(\xi_1-\eta_1,\xi_2-\eta_2)}.$$
Similarly if $\xi_1+\xi_2\geq 1$ and $\eta_1+\eta_2\geq 1$. Now suppose that $\xi_1+\xi_2\leq 1$ and $\eta_1+\eta_2\geq 1$. Then 
\begin{align*}
\normi{\pop(\xi_1,\xi_2)-\pop(\eta_1,\eta_2)}&=\normi{((\xi_1+\xi_2)-1,0-(\eta_1+\eta_2-1))}\\
&=(1-\xi_1-\xi_2)+(\eta_1+\eta_2-1)\\
&=\eta_1-\xi_1+\eta_2-\xi_2\leq\normi{(\xi_1-\eta_1,\xi_2-\eta_2)}.
\end{align*}
As $\plt$ is contractive each $P_h^k$ is contractive.

To construct $P_A$ we again apply Lemma \ref{infinite-comp}. In this case we must select the ordering $\prec$ with more care to ensure that hypothesis (2) of the lemma is satisfied. We will say that the \emph{length} of a pair $(h,k)$ is $l$ if $l$ is the largest integer such that we can write $h=h_0<h_1<h_2<\dots<h_l=k$. The length is one if and only if $h$ is a predecessor of $k$. We impose an ordering $\prec$ on $\Hlt$, by first declaring $(h,k)\prec(h',k')$ if the length of $(h,k)$ is greater than the length of $(h',k')$, and choosing any order on pairs of the same length.

The $\Hlt$-support of a point $\xi$ is the set of pairs for which $\xi$ is virtual. If $(h,k)$ is such a pair then $k$ lies in the support of $\xi$ as a function and $h<k$, so the $\Hlt$-support is finite. Now suppose that $(h,k)$ is the $\prec$-least element of the $\Hlt$ support of $\xi$. Then in particular $(h,k)$ has maximal length. Suppose that $\xi$ is actual for a pair $(h',k')$, but applying $P_h^k$ makes it virtual. Then one of $h',k'$ must be one of $h,k$. Moreover, as $P_h^k$ reduces the $k$ coordinate, and increases that $h$ coordinate we have either $h=k'$ or $k=h'$. Consider the first of these cases. We have $h'<k'=h<k$, and $\xi_{h'}<1$ while $\xi_k>0$. It follows that $(h',k)$ is a virtual pair for $\xi$ of greater length than $(h,k)$, contradicting $\prec$-minimality. Similarly in the case $k=h'$ we deduce that $(h,k')$ a virtual pair for $\xi$ of greater length than $(h,k)$. Thus applying $P_h^k$ with $(h,k)$ being $\prec$-minimal cannot increase the $\Hlt$-support. The point $P_h^k$ is $(h,k)$-actual by construction, hence condition (2) of the lemma is satisfied. Thus applying the lemma we obtain a map $P_A:I\to I$ which is a retraction of $I$ onto those $\xi$ in $I$ such that the $\Hlt$-support is empty. In other words $P_A$ is a retraction onto $A$ as required. As each $P_h^k$ is contractive we deduce that $P_A$ is also contractive. Each $P_h^k$ preserves the $l^1$-norm, and for each $\xi$, the point $P_A(\xi)$ is the image of $\xi$ under a finite composition of the maps $P_h^k$. Hence $P_A$ preserves the $l^1$-norm.

To conclude, taking the composition $P=P_A\circ P_I$ we have a contractive retraction of $\C$ onto $A$, which preserves the $l^1$-norm on $I$.
\end{proof}

\begin{cor}
If $X$ is equipped with the $l^1$-path metric, then the embedding $X\to A\subset \C$ is an isometry.
\end{cor}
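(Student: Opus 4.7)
The plan is to verify the two directions of the isometry separately, using that both source and target have their natural $l^1$ structures. For the inequality $\|\iota(x)-\iota(y)\|_1 \leq d^{l^1}_X(x,y)$, where $\iota$ denotes the embedding, I would first observe that $\iota$ is affine on each cube $Q$ of $X$ and sends the vertices of $Q$ to vertices of a unit subcube of $\C$ at the correct $l^1$ distance (the number of hyperplanes separating them). Consequently $\iota|_Q$ is an affine $l^1$-isometry onto a cube of $\C$, so $\iota$ sends any path $\gamma$ in $X$ to a path in $\C$ of the same $l^1$-length. Since the $l^1$ norm distance is a lower bound for $l^1$ path length in $\C$, we have $\|\iota(x)-\iota(y)\|_1 \leq \mathrm{length}(\iota\circ\gamma)=\mathrm{length}(\gamma)$, and taking the infimum over $\gamma$ yields the desired bound.

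For the reverse inequality, I would invoke the Projection Theorem. Consider the straight-line path $\gamma:[0,1]\to \C$ given by $\gamma(t)=(1-t)\iota(x)+t\iota(y)$; this lies in $\C$ by convexity and has $l^1$-length equal to $\|\iota(x)-\iota(y)\|_1$. Composing with the contractive retraction $P:\C\to A$ of Theorem \ref{projection-theorem} produces a path $P\circ\gamma$ from $\iota(x)$ to $\iota(y)$ lying in $A$ (using that $P|_A=\mathrm{id}$), of length at most $\|\iota(x)-\iota(y)\|_1$. Transporting this path back through $\iota^{-1}$ on each cube of $A$ gives a path in $X$ from $x$ to $y$ of the same length, whence $d^{l^1}_X(x,y)\leq \|\iota(x)-\iota(y)\|_1$.

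The only non-routine point — what I would call the main obstacle, though it is really bookkeeping — is justifying that a path in $A$ can be pulled back to a path in $X$ of the same length. I would handle this by verifying that the cubes of $X$ correspond bijectively with the cubes of $A$ via $\iota$ (each cube of $A$ is the image of a unique cube of $X$), and that $\iota$ restricts to an affine $l^1$-isometry on each such cube. Then any rectifiable path in $A$ decomposes into finitely many segments lying in single cubes of $A$, each with a canonical isometric preimage in the corresponding cube of $X$, and concatenation preserves total length. Note that property (3) of the Projection Theorem (preservation of $l^1$-norm on intervalic points) is not directly needed for this corollary; the contractive property alone suffices.
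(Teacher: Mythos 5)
Your proof is correct and follows essentially the same route as the paper: both directions reduce to the Projection Theorem (the nontrivial inequality) plus the general fact that the $l^1$ norm is a lower bound for $l^1$ path length. The paper streamlines the bookkeeping by simply declaring the identification of $X$ with $A$ and recasting the claim as ``the restriction of $d_1$ to $A$ is a path metric,'' whereas you spell out the cube-by-cube pullback explicitly, but the underlying argument is identical.
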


\begin{proof}
By definition, the $l^1$-path metric on $X$ is the metric obtained by taking the infimum of path-lengths, where these path lengths are measured using the $l^1$-metric on each cube of $X$. Identifying $X$ with its image $A$ in $\C$, the $l^1$-path metric is precisely the path metric induced from the $l^1$-metric on $\C$, hence the statement of the corollary amounts to the assertion that the restriction of the $l^1$-metric to $A$ is a path metric.

Let $d_1$ denote the $l^1$-metric on $A$, and $d_p$ the induced path metric. Given two points $\xi,\eta$ in $A$, the line $(1-t)\xi+t\eta$ is a path from $\xi$ to $\eta$ in $\C$ of length $d_1(\xi,\eta)$. Since the projection $P:\C\to A$ is contractive, $P((1-t)\xi+t\eta)$ gives a path from $\xi$ to $\eta$ of length at most $d_1(\xi,\eta)$, and this path lies in $A$. Thus we have $d_p(\xi,\eta)\leq d_1(\xi,\eta)$. On the other hand, given any metric the induced path metric satisfies the converse inequality, that is $d_p(\xi,\eta)\geq d_1(\xi,\eta)$, hence the two metrics coincide as claimed.
\end{proof}

\section{Asymptotic dimension}
\label{sec:interp}

In this section we make use of the existence of controlled colourings to prove that for a finite dimensional CAT(0) cube complex, the asymptotic dimension is bounded above by the dimension. We begin with the definition of asymptotic dimension.

\begin{defn}[\cite{G}]
Let $X$ be a metric space. The \emph{$R$-degree} of a cover $\U$ of $X$ is the supremum over $x\in X$ of the cardinality of $\{U\in\U: B_R(x)\text{ meets }U\}$. The \emph{asymptotic dimension of $X$} is the least $D$ such that for every $R>0$ there is a cover of $X$ by sets of uniformly bounded diameter and with $R$-degree at most $D+1$.
\end{defn}

The following provides a useful characterisation of asymptotic dimension.

\begin{defn}
A map $\phi$ from a metric space $X$ to a cell complex $Y$ is \emph{uniformly cobounded} if there exists $S>0$ such that $\phi^{-1}(\sigma)$ has diameter at most $S$ for all cells $\sigma$ of $Y$. A map $\phi$ between metric spaces $\phi:X\to Y$ is \emph{cobornologous} if for all $R>0$ there exists $S>0$ such that if $d(\phi(x_1),\phi(x_2))\leq R$ then $d(x_1,x_2)\leq S$.
\end{defn}

\begin{defn}
A \emph{uniform simplicial complex} is a simplicial complex $Y$, equipped with the metric it inherits as a subcomplex of the infinite simplex in $l^2(V)$, where $V$ is the vertex set of $Y$.
\end{defn}

\begin{lemma}[\cite{G,BD}]
\label{Gromov-lemma}
A metric space $X$ has asymptotic dimension $D$ if and only if for every $\varepsilon>0$ there exists an $\varepsilon$-Lipschitz uniformly cobounded map from $X$ into a $D$-dimensional uniform simplicial complex.
\end{lemma}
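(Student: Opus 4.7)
This is the standard characterisation of asymptotic dimension in terms of Lipschitz maps to polyhedra, due to Gromov and elaborated in Bell--Dranishnikov \cite{BD}. The plan is to prove both directions.

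For the forward direction, suppose $\mathrm{asdim}(X) \leq D$ and fix $\varepsilon > 0$. Choose $R$ large compared to $(D+1)/\varepsilon$. By definition of asymptotic dimension, there is a cover $\U$ of $X$ with uniformly bounded members (say of diameter at most $B$) and $R$-degree at most $D+1$. The latter forces the nerve $N$ of $\{N_R(U):U\in\U\}$ to have dimension at most $D$. Define $\phi_U(x)=\max(0,R-d(x,U))$, which is $1$-Lipschitz and vanishes outside $N_R(U)$, and set $\psi_U=\phi_U/\sum_V\phi_V$. The denominator is everywhere at least $R$, since any $x$ lies in some $U$ giving $\phi_U(x)=R$. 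The map $\Phi(x)=\sum_U\psi_U(x)\,e_U$ then lands in $N\subset l^2(V(N))$, and the standard partition-of-unity estimate yields $\mathrm{Lip}(\Phi)=O((D+1)/R)\leq\varepsilon$. Uniform coboundedness is immediate: the preimage of an open simplex $\{U_0,\dots,U_k\}$ of $N$ lies in $\bigcap_i N_R(U_i)\subseteq N_R(U_0)$, which has diameter at most $2R+B$.

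For the reverse direction, given $R>0$ choose $\varepsilon$ sufficiently small and take an $\varepsilon$-Lipschitz uniformly cobounded $\Phi:X\to Y$ with $Y$ a $D$-dimensional uniform simplicial complex. Fix $\gamma\in(1/(D+2),1/(D+1))$ and cover $Y$ by the open sets $W_v=\{y:\alpha_v(y)>\gamma\}$, where $\alpha_v$ denotes the $v$-th barycentric coordinate. This cover has multiplicity at most $D+1$ (otherwise $D+2$ coordinates each exceeding $1/(D+2)$ would sum to more than $1$) and covers $Y$ (the maximum coordinate of any point is at least $1/(D+1)>\gamma$). Since each $\alpha_v$ is $1$-Lipschitz in the $l^2$-metric, taking $\varepsilon R<\gamma-1/(D+2)$ ensures that the pullbacks $U_v=\Phi^{-1}(W_v)$ have $R$-degree at most $D+1$: if $U_{v_0},\dots,U_{v_{D+1}}$ all met $B_R(x)$, each $\alpha_{v_i}(\Phi(x))$ would exceed $\gamma-\varepsilon R>1/(D+2)$, giving the same contradiction. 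To secure uniformly bounded members, I would refine the $U_v$ by intersecting with preimages of closed cells of $Y$ (which are uniformly bounded by the uniform coboundedness hypothesis), checking that the refinement preserves the $R$-degree bound. A cover with $R$-degree at most $D+1$ and uniformly bounded members witnesses $\mathrm{asdim}(X)\leq D$.

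The main obstacle is the reverse direction: the vertex-based pullback $\{U_v\}$ has the correct multiplicity but may have unbounded elements, while a decomposition along cells yields bounded pieces at the cost of inflating the multiplicity. Reconciling the two using the $\varepsilon$-Lipschitz control of $\Phi$ requires careful bookkeeping to produce a cover that is simultaneously uniformly bounded and of $R$-degree at most $D+1$.
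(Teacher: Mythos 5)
The paper does not prove this lemma --- it is cited directly to Gromov \cite{G} and Bell--Dranishnikov \cite{BD}, so there is no internal proof to compare against. The forward direction of your argument is the standard partition-of-unity construction and is correct: the functions $\phi_U(x)=\max(0,R-d(x,U))$, the normalisation by the (uniformly bounded below) sum, the observation that the nerve of $\{N_R(U)\}$ has dimension $\leq D$ because the $R$-degree bounds its multiplicity, and the coboundedness estimate $\Phi^{-1}(\sigma)\subseteq N_R(U_0)$ of diameter $\leq 2R+B$ all go through.

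The reverse direction, as you yourself flag, is not finished, and the repair you propose is the wrong one. Intersecting each $U_v=\Phi^{-1}(W_v)$ with preimages of closed cells does destroy the $R$-degree bound: a vertex $v$ can belong to arbitrarily many $D$-cells, and many of their $\Phi$-preimages can simultaneously meet a ball $B_R(x)$, so the refined cover has no a priori multiplicity control. The observation you are missing is that no refinement is needed, because the truncated stars $W_v=\{\alpha_v>\gamma\}$ are themselves \emph{uniformly bounded}: $W_v$ lies in the open star of $v$, and any $y$ in a simplex containing $v$ satisfies
\[
\|y-e_v\|_2^2=\sum_{u\neq v}\alpha_u(y)^2+(1-\alpha_v(y))^2\leq(1-\alpha_v(y))+(1-\alpha_v(y))^2\leq 2,
\]
so $\operatorname{diam} W_v\leq 2\sqrt 2$ in the uniform metric (and also in the simplicial path metric, via a concatenation of two such estimates through $e_v$), uniformly in $v$, $Y$, and $D$. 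Now use the \emph{cobornologous} form of the hypothesis, which the paper's remark immediately after the lemma explicitly sanctions for the ``if'' direction, and which is in any case the form the paper's own constructions (Theorem~\ref{contraction-thm}, Theorem~\ref{main-thm}) actually supply. Cobornologousness applied at scale $2\sqrt 2$ gives a single constant $S'$ with $\operatorname{diam} U_v\leq S'$ for every $v$, and the cover $\{U_v\}$ is then simultaneously uniformly bounded and of $R$-degree at most $D+1$, finishing the reverse direction cleanly. Note that the literal ``uniformly cobounded'' hypothesis (only preimages of individual cells bounded) would genuinely not suffice here, since $W_v$ is a union of possibly infinitely many cells and uniform coboundedness does not control $\Phi^{-1}$ of such a union --- this is precisely the point the paper's remark is addressing.
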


\begin{rmk}
Let $Y$ be a connected uniform simplicial complex with metric $d_u$, and let $d_p$ be the path metric agreeing with $d_u$ on each simplex. Then $d_p$ is the greatest metric which agrees with $d_u$ on each simplex, so in particular $d_u\leq d_p$. It follows that if $\phi:X\to Y$ is $\varepsilon$-Lipschitz for $d_p$ then it is also $\varepsilon$-Lipschitz for $d_u$. Moreover, since simplices have bounded diameter, if $\phi$ is cobornologous then it is uniformly cobounded. Hence for the `if' direction of Lemma \ref{Gromov-lemma} one is free to use either of these metrics, and require $\phi$ to be either uniformly cobounded or cobornologous.
\end{rmk}

\begin{lemma}
\label{Gromov-cubed}
If for each $\varepsilon>0$ there is an $\varepsilon$-Lipschitz uniformly cobounded (or cobornologous) map from $X$ into a $D$-dimensional CAT(0) cube complex $Y$ then $X$ has has asymptotic dimension at most $D$.
\end{lemma}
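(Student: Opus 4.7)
The plan is to reduce the statement to Lemma \ref{Gromov-lemma} by showing that every $D$-dimensional CAT(0) cube complex is bi-Lipschitz equivalent to a $D$-dimensional uniform simplicial complex, with bi-Lipschitz constants depending only on $D$.

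Given a $D$-dimensional CAT(0) cube complex $Y$, I would subdivide each cube into simplices (either barycentrically, or using the standard staircase triangulation of $[0,1]^D$ into $D!$ order-simplices) to obtain a simplicial complex $Y'$ of dimension at most $D$ with the same underlying space. Equipping $Y'$ with its uniform simplicial complex structure --- that is, embedding the vertex set $V$ of $Y'$ into $l^2(V)$ as an orthonormal basis and extending affinely on each simplex --- I would then verify that the identity map $Y \to Y'$ is $C$-bi-Lipschitz for a constant $C = C(D)$ depending only on $D$. Both the CAT(0) metric on $Y$ and the path metric on $Y'$ are path metrics, so this comparison reduces to a single cube, where both are compact polyhedral metrics of fixed combinatorial type and hence bi-Lipschitz equivalent with constants depending only on $D$.

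Given $\varepsilon > 0$, I would then apply the hypothesis with $\varepsilon/C$ to produce an $(\varepsilon/C)$-Lipschitz uniformly cobounded (or cobornologous) map $\phi : X \to Y$, and compose with the identity $Y \to Y'$, which is $C$-Lipschitz, to obtain an $\varepsilon$-Lipschitz map $X \to Y'$. The composition remains uniformly cobounded (each simplex of $Y'$ lies inside a cube of $Y$) or cobornologous (since the identity is bi-Lipschitz), and the bound on the asymptotic dimension then follows from Lemma \ref{Gromov-lemma}, using the flexibility described in the preceding remark that one may equivalently use the path metric on $Y'$ and either coboundedness notion.

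The hard part will be pinning down the bi-Lipschitz constant $C(D)$. The subtlety is that in $Y'$ adjacent vertices are at distance $\sqrt{2}$ regardless of the context, whereas cube-edges in $Y$ have length $1$ and the diameter of a $D$-cube grows as $\sqrt{D}$, so the constant must genuinely depend on $D$; but since all $D$-cubes are combinatorially identical, a single fixed computation on $[0,1]^D$ suffices to produce the required bound.
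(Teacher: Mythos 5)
Your proposal is correct and is essentially the paper's argument: triangulate each cube of $Y$, observe that the resulting simplicial path metric is bi-Lipschitz equivalent to the metric on $Y$ with constant depending only on $D$, note that coboundedness/cobornologousness passes through because each simplex lies in a cube, and invoke Lemma \ref{Gromov-lemma} together with the preceding remark. The only cosmetic difference is that the paper phrases the comparison in terms of the $l^1$-path metric on $Y$ rather than the CAT(0) metric, which changes nothing since the two are themselves bi-Lipschitz with a constant depending only on $D$.
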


\begin{proof}
This follows from Lemma \ref{Gromov-lemma} and the above remark. Given an $\varepsilon$-Lipschitz uniformly cobounded map from $X$ into an $n$-dimensional CAT(0) cube complex $Y$, triangulate each cube of $Y$ to obtain a simplicial complex. As $Y$ is finite dimensional the $l^1$-path metric on $Y$, and the simplicial path metric $d_p$ on its triangulation are bi-Lipschitz equivalent, with constant $\lambda$ depending only on the dimension. Hence we obtain a $\lambda\varepsilon$-Lipschitz map from $X$ into a simplicial complex with the path metric $d_p$. As the map is uniformly cobounded for the cubical structure, it is also uniformly cobounded for the simplicial structure as each simplex lies in a cube.

The hypothesis of being cobornologous is stronger than uniformly coboundedness, so either hypothesis is sufficient.
\end{proof}

Let $X$ be a CAT(0) cube complex and let $H$ denote the set of hyperplanes of $X$. Given $c:H\to \{0,1\}$ a colouring of the hyperplanes, let $\Hhat$ denote the set of 0-coloured hyperplanes. As discussed in section \ref{prelim}, by \cite{CN,BN} there is a CAT(0) cube complex $\Xhat$ whose hyperplanes are $\Hhat$, and a canonical quotient map $\pi:X\to\Xhat$. Recall that a vertex of $X$ provides a choice of orientation for each $h\in H$, namely the set $\h_x$ of halfspaces containing $x$. On the other hand a vertex of $\Xhat$ is defined to be a set of halfspaces, one corresponding to each hyperplane in $\Hhat$, whose total intersection is non-empty. The map $\pi$ is given simply by forgetting the orientations on all 1-coloured hyperplanes. Using the coordinates provided by embedding the complexes into $l^1(H)$ and $l^1(\Hhat)$ respectively, the map takes a point in $X$ with coordinates $\xi=(\xi_h)_{h\in H}$ to $\pi(\xi)=\xi|_{\Hhat}$.

\begin{lemma}
\label{pi-cobounded}
If $c$ is an $l$-controlled colouring of $H$, then the canonical map $\pi$ is cobornologous on the vertices of $X$.
\end{lemma}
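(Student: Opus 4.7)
The strategy is to show that for vertices $x_1,x_2$ of $X$, the number of $1$-coloured hyperplanes separating them can be bounded linearly in terms of the number of $0$-coloured hyperplanes separating them, with a constant depending only on $l$. Since on vertices the edge-path distance $d(x_1,x_2)$ equals the number of hyperplanes separating $x_1,x_2$, and $d(\pi(x_1),\pi(x_2))$ equals the number of those hyperplanes which lie in $\hat H$ (i.e.\ the $0$-coloured ones), this immediately gives the cobornologous estimate.

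The first step is to reduce to the inward situation. Let $m$ be the median of $x_0,x_1,x_2$. Then $m\in[x_0,x_1]\cap[x_0,x_2]$, so any edge geodesic from $x_1$ to $m$ and from $m$ to $x_2$ is inward in the sense of Section~\ref{sec:colouring}, and the concatenation is a geodesic from $x_1$ to $x_2$. Moreover the set of hyperplanes separating $x_1,x_2$ is the disjoint union of those separating $x_1,m$ and those separating $m,x_2$. It therefore suffices to prove the key estimate for a single inward geodesic.

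The key estimate is the following: if $\gamma$ is an inward edge geodesic crossing $n_0$ hyperplanes of colour $0$ and $n_1$ of colour $1$, then $n_1\leq(n_0+1)l$. To see this, decompose $\gamma$ into its maximal monochromatic sub-geodesics $\gamma_1,\dots,\gamma_k$ of alternating colours. Each $\gamma_i$ is itself an inward monochromatic geodesic, so by the $l$-controlled hypothesis has length at most $l$. If $t$ denotes the number of $1$-coloured segments then $n_1\leq tl$, while the number of $0$-coloured segments is at least $t-1$ (since the segments alternate) and each contributes at least one $0$-coloured edge, giving $n_0\geq t-1$. Hence $n_1\leq(n_0+1)l$, so the total length of $\gamma$ satisfies
\[
|\gamma|=n_0+n_1\leq n_0(1+l)+l.
\]

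Applying this to the two halves $x_1\to m$ and $m\to x_2$, and summing, we obtain
\[
d(x_1,x_2)\leq d(\pi(x_1),\pi(x_2))(1+l)+2l.
\]
Thus for any $R>0$ the choice $S=R(1+l)+2l$ witnesses the cobornologous condition. There is no real obstacle here; the only point that needs care is the observation that the two halves of the median decomposition are honestly inward, which is immediate from the definition of the median.
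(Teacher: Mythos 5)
Your proof is correct and takes essentially the same approach as the paper: split the geodesic at the median of $x_0,x_1,x_2$ into two inward halves, then use $l$-controlledness to bound the number of $1$-coloured hyperplanes in terms of the $0$-coloured ones on each half. The paper's counting is phrased via disjoint windows of $l+1$ consecutive edges rather than your decomposition into maximal monochromatic runs, but this is cosmetic and yields essentially the same constant.
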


\begin{proof}
Let $x,y$ be vertices of $X$, let $m$ be the median of $x,y,x_0$. If $d(x,m)\geq l+1$ then as the colouring is $l$-controlled and a geodesic from $x$ to $m$ is inward, then there must be at least one 0-coloured hyperplane separating $x,m$. Indeed on any inward geodesic at least one in every $l+1$ hyperplanes is 0-coloured, hence there are at least $\lfloor\frac {d(x,m)}{l+1}\rfloor\geq \frac {d(x,m)}{l+1}-1$ hyperplanes which are 0-coloured and separate $x,m$.

Similarly for the hyperplanes separating $y$ and $m$, hence there are at least $\frac {d(x,m)+d(y,m)}{l+1}-2=\frac {d(x,y)}{l+1}-2$ hyperplanes which are 0-coloured and separate $x,y$. It follows that if the distance between $\pi(x)$ and $\pi(y)$ is at most $R$ then $\frac {d(x,y)}{l+1}-2\leq R$ thus $d(x,y)\leq (l+1)(R+2)$. The map is thus cobornologous on the vertex set.
\end{proof}

The problem with the canonical map is that while it is contractive i.e.\ 1-Lipschitz, it will not be $\varepsilon$-Lipschitz for any $\varepsilon<1$. Indeed if $h$ is a 0-coloured hyperplane, and $x,y$ are vertices which are adjacent across $h$ so that $d(x,y)=1$, then $h$ lies in $\Hhat$ hence $\pi(x),\pi(y)$ are also separated by $h$ and $d(\pi(x),\pi(y))=1$. On the other hand if $x,y$ are far enough apart then they will be separated by some 1-coloured hyperplanes, hence $\pi$ will reduce the distance between them. We therefore need to carry out an interpolation to produce a map which is more evenly contractive. We begin by choosing weights on pairs of hyperplanes.

For $h\in \Hhat$ and $k\in H$ we define a weight
$$w(h,k)=\begin{cases}
\frac l{l+1} & \text{if $h=k$};\\
\frac 1{l+1} & \text{if $k$ is 1-coloured, $h<k$ and there is no 1-coloured hyperplane}\\
& \text{separating $h,k$};\\
0 & \text{otherwise.}
\end{cases}$$
Let $\C=\C_\Hhat$ be the infinite cube in $l^1(\Hhat)$. We define a map $\psi:X\to \C$ as follows. For $x$ in $X$ with coordinates $(\xi_k)_k\in H$ we define $\psi(x)=\zeta$ where
$$\zeta_h=\min\left(1,\sum_{k \in H} w(h,k)\xi_k\right).$$

If $\zeta_h=0$ then we must have $\xi_h=0$ since $w(h,h)>0$. On the other hand if $\zeta_h>0$ then either $\xi_h>0$, or there exists $k$ with $h<k$ and $\xi_k>0$. Since $\xi$ is the coordinate vector of a point $x$ in $X$ we know that it is $(h,k)$-actual, so $\xi_h=1$. Hence $\zeta_h=0$ if and only if $\xi_h=0$. We conclude that the support of $\zeta$ is the restriction to $\Hhat$ of the support of $\xi$, in particular it is finite, hence $\zeta$ lies in the cube $\C$.

\begin{lemma}
\label{psi-contraction}
The map $\psi:X\to \C$ is $\frac l{l+1}$-Lipschitz.
\end{lemma}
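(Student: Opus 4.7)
The plan is to verify the Lipschitz bound on single edges of $X$, since $\psi$ is piecewise affine on cubes. For adjacent vertices $x,y$ separated by a hyperplane $k_0$, the coordinate vectors $\xi(x)$ and $\xi(y)$ differ by $\pm 1$ only in the $k_0$-slot. Since $t \mapsto \min(1,a+t)$ is $1$-Lipschitz, $|\zeta_h(x)-\zeta_h(y)| \leq w(h,k_0)$ for each $h \in \Hhat$, and summing gives $\|\psi(x)-\psi(y)\|_1 \leq \sum_{h \in \Hhat} w(h,k_0)$. It therefore suffices to bound this sum by $l/(l+1)$ for every $k_0 \in H$.

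I would split on the colour of $k_0$. When $k_0$ is $0$-coloured the only nonzero term is $w(k_0,k_0)=l/(l+1)$, which is immediate. When $k_0$ is $1$-coloured the nonzero contributions are $1/(l+1)$ for each $h$ in $S := \{h \in \Hhat : h<k_0,\ \text{no $1$-coloured hyperplane lies strictly between $h$ and $k_0$}\}$, reducing the problem to $|S|\leq l$.

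To prove $|S|\leq l$, the strategy is to produce a monochromatic $0$-coloured inward geodesic of length $|S|$ in $X$, so that the $l$-controlled hypothesis finishes the job. I would construct vertices $v,v' \in X$ differing exactly on the $S$-coordinates by taking $F_v = \{h \in H : h \leq k \text{ for some } k \in S\}$ (the downward closure of $S$ under $<$) and $F_{v'} = F_v \setminus S$. Using Lemma \ref{image-of-X} one checks that these correspond to vertices: both are finite (since $S$ is finite and each $\{h : h \leq k\}$ lies in a finite interval), free of opposite pairs (each $h \in F_v$ satisfies $h^+ \supseteq k_0^+ \neq \emptyset$), and $F_v$ is downward closed by construction. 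Then $v$ and $v'$ differ by the $|S|$ hyperplanes of $S$, all $0$-coloured, giving the required inward monochromatic geodesic.

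The main obstacle is that $F_{v'} = F_v \setminus S$ might fail to be downward closed in general; the argument needs the full strength of the $S$-condition. If $k \in F_v \setminus S$ and $h<k$ with $h \in S$, then $k \leq k'$ for some $k' \in S$ gives $k<k_0$. Either $k$ is $1$-coloured, in which case $k$ itself lies strictly between $h$ and $k_0$; or $k$ is $0$-coloured but not in $S$, in which case the definition of $S$ applied to $k$ forces a $1$-coloured hyperplane strictly between $k$ and $k_0$, hence also strictly between $h$ and $k_0$. Either way, $h \in S$ is contradicted, so $h \notin S$ and therefore $h \in F_{v'}$ as required.
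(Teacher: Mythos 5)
Your proof is correct and follows essentially the same route as the paper's: both reduce to the column-sum bound $\sum_{h\in\Hhat} w(h,k_0) \leq l/(l+1)$ and, when $k_0$ is $1$-coloured, deduce it from the $l$-control hypothesis by producing a $0$-coloured inward geodesic of length $|S|$. The paper merely asserts the existence of such a geodesic (starting from the vertex $y$ adjacent to $k_0$ nearest $x_0$); your explicit construction of $v,v'$ via the downward closure $F_v$ of $S$, together with the verification that $F_{v'}=F_v\setminus S$ remains downward closed, supplies the detail the paper leaves implicit.
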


\begin{proof}
We will show that the map taking a point $x$ with coordinates $\xi_k$ to $\eta_h=\sum\limits_{k \in H} w(h,k)\xi_k$ is $\frac l{l+1}$-Lipschitz, it being clear that truncating coordinates at 1 will only reduce distances.

Two points $x,x'$ with coordinates $\xi_k,\xi'_k$ have images $\eta_h,\eta'_h$ with
$$\eta_h-\eta'_h=\sum_{k \in H} w(h,k)(\xi_k-\xi'_k).$$
Taking the $l^1$-norm we get
$$\sum_{h \in \Hhat}\sum_{k \in H} w(h,k)|\xi_k-\xi'_k|=\sum_{k \in H}|\xi_k-\xi'_k|\left(\sum_{h \in \Hhat} w(h,k)\right).$$
It thus suffices to show that for each $k$ in $H$ we have $\sum_{h \in \Hhat} w(h,k)\leq \frac l{l+1}$.

If $k$ is 0-coloured then $w(h,k)$ is zero except when $h=k$, so in this case the sum is just $w(h,h)=\frac l{l+1}$. If $k$ is 1-coloured then $\sum\limits_{h \in \Hhat} w(h,k)$ is $\frac 1{l+1}$ times the number of 0-coloured hyperplanes $h$ with $h<k$ and such that no 1-coloured hyperplane separates $h,k$. Taking $y$ to be the vertex adjacent to $k$ of minimal distance from $x_0$, there is a 0-coloured inward geodesic crossing all of these hyperplanes $h$. Hence as the colouring is $l$-controlled there are at most $l$ such hyperplanes. Thus we again have $\sum\limits_{h \in \Hhat} w(h,k)\leq \frac l{l+1}$ as required.
\end{proof}

We now combine these results to prove the existence of an $\frac l{l+1}$-Lipschitz cobornologous map from $X$ to $\Xhat$.

\begin{thm}
\label{contraction-thm}
Let $X$ be a CAT(0) cube complex of dimension $D$, with hyperplanes $H$. Let $c$ be an $l$-controlled colouring of $H$. Then there exists a cobornologous, $\frac l{l+1}$-Lipschitz map from $X$ to the cube complex $\Xhat$ with hyperplanes $\Hhat=c^{-1}(\{0\})$.
\end{thm}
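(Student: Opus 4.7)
The strategy is to take the composition $\phi := P \circ \psi$, where $\psi : X \to \C_{\Hhat}$ is the map from Lemma~\ref{psi-contraction} and $P : \C_{\Hhat} \to A$ is the contractive retraction from the Projection Theorem~\ref{projection-theorem} applied to the cube complex $\Xhat$; here $A$ denotes the image of $\Xhat$ in $\C_{\Hhat}$, which is isometric to $\Xhat$ (with its $l^1$-path metric) by the corollary to the Projection Theorem. Since $\psi$ is $\tfrac{l}{l+1}$-Lipschitz and $P$ is contractive, the composite $\phi \colon X \to \Xhat$ is $\tfrac{l}{l+1}$-Lipschitz, handling the Lipschitz half of the statement immediately.

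For cobornologousness it suffices to verify the property on vertices of $X$, since $\phi$ is Lipschitz and every point of $X$ lies within bounded distance of a vertex. The essential observation is that for vertices $x, y \in X$ and any $h \in \Hhat$ separating them, the $h$-coordinates of $\psi(x)$ and $\psi(y)$ differ by at least $\tfrac{l}{l+1}$. Indeed, from the remarks preceding Lemma~\ref{psi-contraction}, $\psi(\cdot)_h = 0$ precisely when the underlying coordinate $\xi_h = 0$, and $\psi(\cdot)_h \geq w(h,h) = \tfrac{l}{l+1}$ otherwise; since a vertex gives $\xi_h \in \{0,1\}$, one of the two points has $\psi(\cdot)_h = 0$ and the other has $\psi(\cdot)_h \geq \tfrac{l}{l+1}$. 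Summing over such $h$ yields
\[
\normi{\psi(x) - \psi(y)} \;\geq\; \tfrac{l}{l+1}\, N_{0}(x, y),
\]
where $N_{0}(x, y) = d_{\Xhat}(\pi(x), \pi(y))$ counts the $0$-coloured hyperplanes separating $x$ and $y$.

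The next step is to transfer this bound through $P$ to control $d_{\Xhat}(\phi(x), \phi(y)) = \normi{\phi(x) - \phi(y)}$. For a vertex $x$, the support of $\psi(x)$ consists exactly of the $0$-coloured hyperplanes separating $x_0$ from $x$; no two of these can be opposite in $\Xhat$ (they all contain $x$), so $\psi(x)$ is intervalic. Hence by the construction in the proof of the Projection Theorem, $P$ acts as $P_A$ on $\psi(x)$ and $\psi(y)$, and $P_A$ preserves the $l^1$-norm. A careful tracking of how $P_A$ redistributes mass on this specific pair — whose coordinates take only the values $\{0, \tfrac{l}{l+1}, 1\}$ and whose virtual pairs are controlled by the order structure on $H$ — should yield a bound of the form $d_{\Xhat}(\phi(x), \phi(y)) \geq \tfrac{l}{l+1}\, N_{0}(x, y) - C$ for a constant $C$ depending only on $\dim X$. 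Combined with the cobornologousness of $\pi$ from Lemma~\ref{pi-cobounded}, which gives $N_{0}(x, y) \geq \tfrac{d(x,y)}{l+1} - 2$, this produces the desired cobornologous bound on $\phi$.

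The main obstacle is the transfer step through $P$: although $P$ is merely contractive and $P_A$ can in general strictly shrink pairwise $l^1$-distances between arbitrary intervalic points, the special combinatorial structure of $\psi(x), \psi(y)$ as $\{0, l/(l+1), 1\}$-valued vectors must be exploited to show that the lower bound on $\normi{\psi(x) - \psi(y)}$ is preserved up to a bounded additive error. This is the only non-routine step of the argument.
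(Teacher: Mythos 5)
Your overall framework matches the paper's: you form $\phi = P\circ\psi$, deduce the $\tfrac{l}{l+1}$-Lipschitz bound immediately from Lemma~\ref{psi-contraction} and the contractivity in Theorem~\ref{projection-theorem}, and correctly reduce cobornologousness to vertices. Your observations that $\psi(x)$ is intervalic for a vertex $x$, that its coordinates lie in $\{0,\tfrac{l}{l+1},1\}$, and that $\normi{\psi(x)-\psi(y)}\geq\tfrac{l}{l+1}\,N_0(x,y)$, are all correct.

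The gap is exactly where you flag it: you do not actually show that the projection $P$ preserves this lower bound up to bounded error, and it is not at all clear that the proposed direct estimate on $P_A$ goes through. The Projection Theorem gives you that $P_A$ preserves the $l^1$-\emph{norm} and is contractive on distances, but those two facts do not control how much $P_A$ can shrink the distance between a particular pair of intervalic points: norm preservation says nothing about the difference vector, and contractivity is the wrong direction. Exploiting the $\{0,\tfrac{l}{l+1},1\}$ structure and the ordering on virtual pairs might salvage this, but as written this is an unproven claim, not an argument.

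The paper sidesteps the issue with a different device, which you should adopt. Rather than estimating how $P$ acts on $\psi(x),\psi(y)$ for arbitrary vertices, first move $x$ and $y$ a uniformly bounded distance (at most $D$ to a nearest vertex, then at most $l$ further along a maximal $0$-coloured inward geodesic) to vertices $x'',y''$ having \emph{no} $0$-coloured inward edges. For such a vertex, whenever $\xi''_h=1$ for $h\in\Hhat$ there is a $1$-coloured $k$ with $h<k$ and $\xi''_k=1$, so the pre-truncation coordinate satisfies $\eta_h\geq w(h,h)+w(h,k)=\tfrac{l}{l+1}+\tfrac{1}{l+1}=1$. Hence $\psi(x'')=\pi(x'')$ is already a $\{0,1\}$-valued intervalic actual point, i.e.\ a vertex of $\Xhat$, and $P$ fixes it: $\phi(x'')=\pi(x'')$, and likewise for $y''$. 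Cobornologousness of $\phi$ then follows directly from Lemma~\ref{pi-cobounded} applied to $\pi$, with an additive correction of $2(l+D)$ coming from the initial displacement and the fact that $\phi$ is contractive. This reduction to special vertices is what makes the argument close; without it, the transfer-through-$P$ step remains open.
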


\begin{proof}
Let $\psi:X\to \C=\C_\Hhat$ be as above. Identifying $\Xhat$ with the space of actual intervalic coordinates in $\C$, let $P:\C\to \Xhat$ be the projection provided by Theorem \ref{projection-theorem}. We define a map $\phi:X\to \Xhat$ by $\phi(x)=P(\psi(x))$. This map is $\frac l{l+1}$-Lipschitz as $\psi$ is $\frac l{l+1}$-Lipschitz and $P$ is contractive (Lemma \ref{psi-contraction} and Theorem \ref{projection-theorem}). It remains to prove that $\phi$ is cobornologous.

Let $x,y$ be points of $X$. Then there exist vertices $x',y'$ of $X$ with $d(x,x'),d(y,y')\leq D$ where $D$ is the dimension of $X$. As the colouring is $l$-controlled, following maximal 0-coloured inward geodesics from $x',y'$ we reach vertices $x'',y''$ having no 0-coloured inward edges with $d(x',x'')\leq l$ and $d(y',y'')\leq l$. As $\phi$ is contractive we note that $d(\phi(x),\phi(x'')),d(\phi(y),\phi(y''))\leq l+D$, hence if $d(\phi(x),\phi(y))\leq R$ then $d(\phi(x''),\phi(y''))\leq R+2(l+D)$.

We consider the the images of $x'',y''$ under the map $\psi$. Let $\xi''_k$ be the coordinates of $x''$ and consider $\eta_h=\sum_{k\in H} w(h,k)\xi''_k$. We have already remarked that the support of $\eta$ is the support of the restriction of $\xi''$ to $\Hhat$, in other words it is the set of 0-coloured hyperplanes separating $x_0,x''$. Moreover as $x''$ has no 0-coloured inward edges, for any such $h$ there exists a 1-coloured hyperplane $k$ with $h<k$. It follows that $\eta_h\geq w(h,h)+w(h,k)=1$. Hence $\zeta=\psi(x)$ is precisely the characteristic function of the set of $h$ in $\Hhat$ separating $x_0,x''$. This is the image of $x''$ under the canonical quotient map, that is $\psi(x'')=\pi(x'')$.

Since $\pi(x'')$ lies in $\Xhat$ its coordinates are intervalic and actual so we have
$$\phi(x'')=P(\psi(x''))=P(\pi(x''))=\pi(x'').$$
Similarly $\phi(y'')=\pi(y'')$. We know from Lemma \ref{pi-cobounded} that $\pi$ is uniformly cobounded. Hence given $R>0$ there exists $S$ such that for $s(\phi(x''),\phi(y''))=d(\pi(x''),\pi(y''))\leq R+2(l+D)$ we have $d(x'',y'')\leq S$. It follows that if $d(\phi(x),\phi(y))\leq R$ then $d(x,y)\leq S+2(l+D)$, i.e.\ $\phi$ is cobornologous.
\end{proof}

Applying this we deduce that finite dimensional CAT(0) cube complexes have finite asymptotic dimension.

\begin{thm}
\label{main-thm}
Let $X$ be a CAT(0) cube complex of dimension $D$. Then for all $\varepsilon>0$ there exists an $\varepsilon$-Lipschitz cobornologous map from $X$ to a CAT(0) cube complex of dimension at most $D$. Thus $X$ has asymptotic dimension at most $D$.
\end{thm}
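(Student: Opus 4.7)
The plan is to iterate the map constructed in Theorem \ref{contraction-thm}. Set $L = 3^{D-1}D$. Since the flatness $f$ of $X$ is at most its dimension $D$, Theorem \ref{colouring} provides an $l$-controlled colouring $c$ of the hyperplane set $H$ with $l \le L$, and Theorem \ref{contraction-thm} then yields a cobornologous $\tfrac{l}{l+1}$-Lipschitz map $\phi_1 \colon X \to \Xhat_1$, where $\Xhat_1$ is the CAT(0) cube complex whose hyperplane set $\Hhat_1 = c^{-1}(\{0\})$ is a subset of $H$. Because the dimension of a CAT(0) cube complex equals the maximum size of a family of pairwise intersecting hyperplanes, the dimension of $\Xhat_1$ is at most $D$.

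I then iterate: choose a basepoint in $\Xhat_1$ and apply the same construction to produce $\phi_2 \colon \Xhat_1 \to \Xhat_2$, and inductively maps $\phi_i \colon \Xhat_{i-1} \to \Xhat_i$ for $i \ge 1$ (with $\Xhat_0 = X$). Each $\Xhat_i$ is a CAT(0) cube complex of dimension at most $D$, hence of flatness at most $D$, so each $\phi_i$ is cobornologous and $\Lambda$-Lipschitz for the fixed constant $\Lambda := L/(L+1) < 1$. Write $\Phi_n = \phi_n \circ \cdots \circ \phi_1 \colon X \to \Xhat_n$; this composition is $\Lambda^n$-Lipschitz, and since the class of cobornologous maps is closed under composition (one chains the controls $S_i(\cdot)$), $\Phi_n$ is cobornologous as well.

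Given $\varepsilon > 0$, choose $n$ with $\Lambda^n \le \varepsilon$. Then $\Phi_n$ is the required $\varepsilon$-Lipschitz cobornologous map from $X$ to a CAT(0) cube complex $\Xhat_n$ of dimension at most $D$, which proves the first assertion. The asymptotic dimension bound $\mathrm{asdim}(X) \le D$ then follows immediately from Lemma \ref{Gromov-cubed}.

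Essentially all the work has already been done in Sections \ref{sec:colouring}--\ref{sec:interp}; the only obstacle here is to verify that the dimension and the Lipschitz constant can be controlled \emph{uniformly} across all stages of the iteration, so that the iterated Lipschitz constants actually tend to zero. This is immediate: taking the quotient by a subset of hyperplanes can only decrease or preserve the number of pairwise intersecting hyperplanes, so the dimension of each $\Xhat_i$ never exceeds $D$, hence neither does its flatness, and $l/(l+1)$ is bounded above by $\Lambda < 1$ at every step.
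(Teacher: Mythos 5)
Your proposal is correct and follows essentially the same route as the paper: apply Theorem \ref{colouring} and Theorem \ref{contraction-thm} once, observe that the quotient is again a CAT(0) cube complex of dimension at most $D$, iterate to drive the Lipschitz constant below $\varepsilon$, and invoke Lemma \ref{Gromov-cubed}. You are merely more explicit than the paper about why the dimension (hence flatness) bound persists across iterations so that the contraction factor is uniformly $\le 3^{D-1}D/(3^{D-1}D+1)$, which is a helpful clarification rather than a different argument.
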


\begin{proof}
By Theorem \ref{colouring} a $D$-dimensional CAT(0) cube complex $X$ admits a $3^{D-1}D$-controlled colouring, as the flatness of $X$ is at most $D$. Thus by Theorem \ref{contraction-thm} there exists a $\frac {3^{D-1}D}{3^{D-1}D+1}$ contractive cobornologous map from $X$ into $\Xhat$, where $\Xhat$ is again a CAT(0) cube complex of dimension at most $D$. Iterating the process we can produce an $\varepsilon$-Lipschitz map for any $\varepsilon>0$, and as this is a composition of cobornologous maps it is again cobornologous as required. The asymptotic dimension bound now follows from Lemma \ref{Gromov-cubed}.
\end{proof}

If $G$ acts properly isometrically on a CAT(0) cube complex (or indeed any metric space) then each orbit is coarsely equivalent to the group $G$. Since asymptotic dimension is monotonic, one obtains the following immediate consequence.

\begin{cor}
\label{main-cor1}
Let $G$ be a group admitting a proper isometric action on a CAT(0) cube complex of dimension $D$. Then $G$ has asymptotic dimension at most $D$.
\end{cor}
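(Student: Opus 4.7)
The strategy is immediate given Theorem \ref{main-thm}: we already know that the CAT(0) cube complex $X$ on which $G$ acts has asymptotic dimension at most $D$, so the task reduces to transferring this bound from $X$ to $G$ via the action. The tool I would use is the standard fact that asymptotic dimension is monotonic under coarse embeddings (and in particular under coarse equivalence), together with the observation that a proper isometric action yields a coarse embedding of $G$ into $X$ through any orbit map.

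Concretely, I would fix a basepoint $x_0 \in X$ and consider the orbit map $\phi : G \to X$, $g \mapsto g \cdot x_0$, where $G$ is equipped with a (left-invariant) proper metric, for instance a word metric coming from any choice of generating set in the finitely generated case, or the canonical coarse structure defined by compact sets otherwise. Since the action is by isometries, $\phi$ is $1$-Lipschitz up to an additive constant (or more precisely $\phi$ satisfies a Lipschitz-type upper bound $d(\phi(g),\phi(h)) \leq C \, d_G(g,h)$ when $d_G$ is a word metric). Properness of the action is exactly what guarantees the lower bound needed for $\phi$ to be a coarse embedding: for each $R > 0$, the set of $g$ with $d(x_0, g \cdot x_0) \leq R$ is finite, so there exists $S(R)$ with $d(\phi(g),\phi(h)) \leq R \implies d_G(g,h) \leq S(R)$. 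Thus $\phi$ is a coarse embedding of $G$ into $X$.

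Applying the monotonicity of asymptotic dimension under coarse embeddings (an elementary consequence of the definition, since any uniformly bounded cover of $X$ pulls back to one of $G$ via $\phi$, with the $R$-multiplicity controlled by the coarse embedding constants), we conclude $\operatorname{asdim}(G) \leq \operatorname{asdim}(X)$. Combined with Theorem \ref{main-thm}, this yields $\operatorname{asdim}(G) \leq D$.

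There is no real obstacle here; the only subtlety is being careful about which metric one puts on $G$ when $G$ is not assumed finitely generated. In that case one can invoke the coarse structure determined by compact subsets (equivalently, Roe's bounded geometry coarse structure on a countable group), under which the orbit map remains a coarse embedding and asymptotic dimension retains its monotonicity. The argument then goes through verbatim.
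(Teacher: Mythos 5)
Your proof is correct and follows essentially the same route as the paper: the paper simply remarks that a proper isometric action makes each orbit coarsely equivalent to $G$, and then invokes monotonicity of asymptotic dimension together with Theorem \ref{main-thm}. Your write-up expands the same argument (orbit map, properness giving the coarse-embedding lower bound, monotonicity) and the extra care about the non-finitely-generated case is fine but standard.
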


As an application of this result, we examine the asymptotic dimension bound this gives us for small cancellation groups.

\begin{ex}
\label{small-cancellation}
In \cite{W}, Wise showed that if $G$ is a $B(4)-T(4)$ or $B(6)$ small cancellation group then $G$ acts properly isometrically on a CAT(0) cube complex built from the presentation 2-complex. The prsentation complex should be subdivided if necessary to ensure cells have even circumference. In the $B(4)-T(4)$ case the maximal cubes correspond to cells, and their dimension is half the circumference of the cell. Hence the CAT(0) cube complex has dimension $c/2$ where $c$ is the maximal circumference of a cell in the presentation complex of $G$. By Corollary \ref{main-cor1}, it follows that for a $B(4)-T(4)$ group, the asymptotic dimension is at most $c/2$.

In the $B(6)$ case the maximal cubes correspond to cells, links or tricombs \cite[Lemma 9.4]{W}. Cubes corresponding to cells again have dimension at most $c/2$. For each tricomb cube, one can find two cells in the presentation complex (a corner and a cell on the opposite side of the tricomb), such that all hyperplanes cross one or other of these cells. It follows that there are at most $2c/2=c$ hyperplanes, so these cubes have dimension at most $c$. Finally, cubes corresponding to links have dimension at most $l$ where $l$ is the maximal size of a complete graph in the generalised link of a vertex. For $B(6)$ small cancellation groups we therefore deduce that the asymptotic dimension is at most $\max\{c,l\}$.
\end{ex}


\begin{thebibliography}{99}
\bibitem{AA} A. Akhmedov \emph{A new metric criterion for non-amenability III: Non-amenability of R.Thompson's group F}	arXiv:0902.3849v4.

\bibitem{BD} G.C. Bell and A.N. Dranishnikov, \emph{A Hurewicz-type theorem for asymptotic dimension and applications to geometric group theory}, Trans.\ Amer.\ Math.\ Soc.\ 358 (2006), no.\ 11, 4749--4764.

\bibitem{BH} M.R. Bridson, A. Haefliger, \emph{Metric Spaces of Non-Positive Curvature}, Springer, Berlin, Heidelberg, 1999.

\bibitem{B} M. G. Brin, et al,\emph{On Shavgulidze's Proof of the Amenability of some Discrete Groups of Homeomorphisms of the Unit Interval}, arXiv:0908.1353v5.

\bibitem{BCGNW} J. Brodzki, S.J. Campbell, E. Guentner, G.A. Niblo, N.J. Wright, \emph{Property $A$ and $\rm CAT(0)$ cube complexes}.  J. Funct. Anal.  256  (2009),  no. 5, 1408--1431.

\bibitem{CN} I. Chatterji and G.A. Niblo, \emph{From wall spaces to $\rm CAT(0)$ cube complexes}, Internat.\ J.\ Algebra Comput.\ 15 (2005), no.\ 5-6, 875--885. 

\bibitem{D} A.N. Dranishnikov, \emph{Groups with a Polynomial Dimension Growth}, Geometriae Dedicata (2006) 119:1--15.

\bibitem{Dcox} A.N. Dranishnikov \emph{On asymptotic dimension of amalgamated products and right-angled Coxeter groups}, Algebr.\ Geom.\ Topol.\ 8  (2008),  no.\ 3, 1281--1293.

\bibitem{Dopen} A.N. Dranishnikov \emph{Open Problems in Asymptotic Dimension Theory}, Preprint, http://www.aimath.org/pggt/Asymptotic\_Dimension.

\bibitem{F} D.S. Farley, \emph{Finiteness and CAT(0) properties of diagram groups}, Topology 42 (2003), 1065--1082.

\bibitem{G-hyp-grps} M. Gromov, \emph{Hyperbolic groups, Essays in Group Theory} (S. Gersten, ed.), MSRI Publ., vol.\ 8, Springer Verlag, 1987, 75--263.

\bibitem{G} M. Gromov, \emph{Asymptotic invariants of infinite groups}, Cambridge University Press, Geometric Group Theory, vol 2 1993.

\bibitem{NR} G.A. Niblo and L. Reeves, \emph{The geometry of cube complexes and the complexity of their fundamental groups}, Topology 37 (1998), 621--633.

\bibitem{NRoll} G.A. Niblo and M.A. Roller, \emph{Groups acting on cubes and Kazhdan's property (T)},  Proc.\ Amer.\ Math.\ Soc.\  126  (1998),  no. 3, 693--699.

\bibitem{BN} B. Nica, \emph{Cubulating spaces with walls}, Algebr.\ Geom.\ Topol.\ 4 (2004), 297--309.

\bibitem{OW} Y. Ollivier and D.T. Wise, \emph{Cubulating random groups at density less than 1/6} Trans.\ Amer.\ Math.\ Soc.\ (to appear).

\bibitem{R} M. A. Roller, \emph{Poc sets, median algebras and group actions. An extended study of Dunwoody's construction and Sageev's theorem}, http://www.maths.soton.ac.uk/pure/preprints.phtml, 1998.

\bibitem{S} M. Sageev, \emph{Ends of group pairs and non-positively curved cube complexes}, Proc.\ London Math.\ Soc.\ 71 (1995), 585--617.

\bibitem{Sh} E.T.Shavgulidze, \emph{About amenability of subgroups of the group of diffeomorphisms of the interval}, arXiv:0906.0107v1.

\bibitem{W} D.T. Wise, \emph{Cubulating small cancellation groups} Geom.\ Funct.\ Anal.\ 14 (2004), no.\ 1, 150--214.


\end{thebibliography}
\end{document}